\documentclass[11pt]{amsart}
\usepackage{array, amsmath, enumerate, color}
\usepackage{amssymb}
\usepackage{mathrsfs}
\usepackage{graphicx,subfigure}
\newcommand*{\rom}[1]{\expandafter\@slowromancap\romannumeral #1@}

\makeatletter
 \usepackage{amsthm}

\usepackage{fullpage}

\makeatother


\newtheorem{thm}{Theorem}[section]
\newtheorem{Lemma}[thm]{Lemma}

\newtheorem{conj}[thm]{Conjecture}

\newtheorem{definition}[thm]{Definition}

\title{The strong chromatic index of $(3,\Delta)$-bipartite graphs}
\author[Huang]{Mingfang Huang}
\address{Department of Mathematics, School of Science,
Wuhan University of Technology, China}
\thanks{The first author is supported by the Fundamental Research Funds for the Central Universities (WUT: 2015IA002).}

\author[YU]{Gexin Yu}
\address{Department of Mathematics,
The College of William and Mary,
Williamsburg, VA 23185}
\thanks{The second author is supported in part by the NSA grant: H98230-16-1-0316.}  

\author[Zhou]{Xiangqian Zhou}
\address{Department of Mathematics and Statistics, Wright State University, Dayton, Ohio, 45435}
\email{xiangqian.zhou@wright.edu}

\subjclass{05C15}
\keywords{bipartite graph, strong edge-coloring, induced matching}
\date{\today}

\begin{document}

\begin{abstract} A strong edge-coloring of a graph $G=(V,E)$ is a partition of its edge set $E$ into induced matchings. 
We study bipartite graphs with one part having maximum degree at most $3$ and the other part having maximum degree $\Delta$. We show that every such graph has a strong edge-coloring using at most $3 \Delta$ colors. Our result confirms a conjecture of Brualdi and Quinn Massey ~\cite{[BQ]} for this class of bipartite graphs.

\end{abstract}
\maketitle

\section{ introduction}

Graphs in this article are assumed to be simple and undirected. Let $G$ be a simple undirected graph. A {\em proper edge-coloring} of $G$ is an assignment of colors to the edges such that no two adjacent  edges have the same color. Clearly, every coloring class is a matching of $G$. However, these matchings may not be induced. If one requires each color class to be an induced matching, that leads to the notion of strong edge-coloring, first introduced by Fouquet and Jolivet ~\cite{[FJ]}.  A {\em strong edge-coloring} of a graph $G$ is a proper edge-coloring such that every two edges joined by another edge are colored differently.
In a strong edge-coloring, every color class induces a matching.
The minimum number of colors required in a strong edge-coloring of $G$ is called the {\em strong chromatic index} and is denoted by $\chi_s^\prime(G)$.

Let $e$ and $e^\prime$ be two edges of $G$. We say that $e$ {\em sees} $e^\prime$ if $e$  and $e^\prime$ are adjacent or share a common adjacent edge. So, equivalently, a strong edge-coloring is an assignment of colors to all edges such that every two edges that can see each other receive distinct colors.

Let $\Delta$ be the maximum degree of $G$ and for $u  \in V(G)$, let $d_G(u)$ denote the degree of $u$ in the graph $G$. For each $S \subseteq V(G)$, let $\Delta(S)= \max\{d_G(s) : s \in S\}$. Using greedy coloring arguments, one may easily show that  $\chi_s^\prime(G) \leq 2 \Delta ^2 - 2 \Delta +1$ holds for every graph $G$. Erd\H{o}s and Ne\v{s}et\v{r}il \cite{[EN]} conjectured the following tighter upper bounds and they also gave examples of graphs that achieve these bounds.

\begin{conj} {\rm (Erd\H{o}s and Ne\v{s}et\v{r}il \cite{[EN]}) } \label{conj:any-g} For every graph $G$, the following inequalities hold. 

\begin{equation*}
 \chi_s^\prime(G) \leq
\begin{cases}
 \frac{5}{4} \Delta^2 & \text{if } \Delta \text{ is even},\\
\frac{1}{4} (5 \Delta^2 - 2 \Delta +1) & \text{if } \Delta \text{ is odd}.
\end{cases}
\end{equation*}

\end{conj}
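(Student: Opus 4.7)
The pair of inequalities above is the Erd\H{o}s--Ne\v{s}et\v{r}il conjecture, which at the time of writing remains open in full generality; what follows is a plan of attack rather than a finished argument. The overall strategy is to recast strong edge-coloring as proper vertex coloring of $L(G)^2$, the square of the line graph of $G$, and to bound $\chi\bigl(L(G)^2\bigr)$ by combining the probabilistic method with structural analysis of local neighborhoods.

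A direct count gives $\Delta\bigl(L(G)^2\bigr) \le 2\Delta^2 - 2\Delta$, which recovers the trivial $2\Delta^2 - 2\Delta + 1$ greedy bound mentioned above. To push down to roughly $\tfrac{5}{4}\Delta^2$, I would exploit the following observation: the set of edges that see a given edge $e$ already contains many pairs of edges that see each other (for instance, any two edges incident to a fixed endpoint of $e$). This forces the conflict graph at $e$ to contain large cliques and correspondingly small independent sets, creating slack relative to the generic worst case on which the trivial bound is tight.

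Concretely, I would iterate a Molloy--Reed-type argument: take a uniformly random partial coloring with $k \approx \tfrac{5}{4}\Delta^2$ colors, uncolor any edge in conflict with one of its sight partners, and recurse on the residual subgraph. Talagrand's concentration inequality together with the Lov\'asz Local Lemma applied to the bad events ``edge $e$ has no available color'' controls how quickly the residual degree drops. The extremal graphs of \cite{[EN]} (essentially blow-ups of $C_5$ along independent matchings) show that both bounds of the conjecture are sharp; in particular the odd-$\Delta$ correction $-2\Delta + 1$ arises from an asymmetric blow-up by $(\Delta-1)/2$ and $(\Delta+1)/2$ copies, so any proof must be tight on precisely these configurations.

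The principal obstacle is the constant. Standard probabilistic tools deliver bounds of the form $c\Delta^2$ with $c$ strictly greater than $\tfrac{5}{4}$, and the gap has proved remarkably stubborn. Closing it would require a structural lemma identifying, in any hypothetical minimum counterexample, a forced $C_5$-blow-up pattern on each problematic neighborhood that could then be colored by hand. I would attempt this via a discharging argument on a counterexample, treating the two parities of $\Delta$ separately so as to capture the correct lower-order terms.
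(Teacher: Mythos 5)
The statement you are addressing is Conjecture~\ref{conj:any-g}, which the paper records as an open conjecture of Erd\H{o}s and Ne\v{s}et\v{r}il; the paper itself offers no proof of it (its contribution, Theorem~\ref{thm:main}, concerns the much more restricted setting of $(3,\Delta)$-bipartite graphs and uses an entirely different, deterministic decomposition-and-greedy-coloring argument). So there is no proof in the paper to compare yours against, and, as you acknowledge yourself, your text is a research plan rather than a proof. The honest framing is to your credit, but as a proof attempt it has a genuine and decisive gap: the entire difficulty of the conjecture lies in getting the multiplicative constant down from what probabilistic arguments deliver to exactly $\tfrac{5}{4}$, and your sketch does not supply the structural lemma or discharging scheme that would accomplish this. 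The Molloy--Reed-style iterated random coloring you describe is known to give bounds of the form $c\Delta^2$ with $c$ strictly above $\tfrac{5}{4}$ (and only for large $\Delta$), and invoking Talagrand plus the Local Lemma does not by itself exploit the clique structure of $L(G)^2$ any more sharply than existing work already does; the claim that a minimum counterexample must contain a forced $C_5$-blow-up pattern on each problematic neighborhood is precisely the missing theorem, not a consequence of the tools you list.

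If your goal is to engage with this paper specifically, the relevant target is Theorem~\ref{thm:main}: there the authors choose a $B$-singular decomposition $G_1,\dots,G_{\Delta}$ extremal with respect to the number of type-1 and type-2 vertices and the cycles in $\mathscr{C}$, establish structural lemmas about the auxiliary graphs $H_i$ (each component has at most one cycle, all cycles and the associated rooted trees are alternating), and then color each $G_i$ greedily with three colors in a carefully chosen edge order. None of the probabilistic machinery in your proposal is needed there, and conversely none of it would yield the exact bound $3\Delta$ that the combinatorial argument achieves.
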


In this paper, we study strong edge-coloring of bipartite graphs. Faudree Gy\'{a}rf\'{a}s, Schelp,  and Tuza~\cite{[FGST]} conjectured the following.

\begin{conj}
\label{conj:delta-square} {\rm (Faudree {\em et al.} \cite{[FGST]})}
For every bipartite graph $G$,   the strong chromatic index of $G$ is at most $\Delta^2$.
 \end{conj}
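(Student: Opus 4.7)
The conjecture is a well-known open problem, so the proposal below is a strategy I would attempt rather than a complete argument. The plan is to combine König's edge-coloring theorem with an inductive refinement, complemented by a reducibility analysis on a minimum counterexample.

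Step one: by König's theorem, $E(G)$ admits a proper edge-coloring into $\Delta$ matchings $M_1, \ldots, M_\Delta$. I would then try to show that each $M_i$ decomposes into at most $\Delta$ induced matchings of $G$, which would yield the required $\Delta^2$ color classes. For a fixed matching $M = M_i$, define the conflict graph $H$ on vertex set $M$, where two edges $e, e' \in M$ are adjacent in $H$ if they see each other in $G$. Since $G$ is bipartite with bipartition $A \cup B$, the edge $e = uv$ (with $u \in A$, $v \in B$) sees $e' = xy \in M$ (with $x \in A$, $y \in B$) if and only if $uy$ or $vx$ is an edge of $G$. A direct count shows $\deg_H(e) \leq 2(\Delta - 1)$, so greedy gives $\chi(H) \leq 2\Delta - 1$, which only yields the weaker bound $\chi_s^\prime(G) \leq 2\Delta^2 - \Delta$. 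To close the gap I would attempt to show that $H$ is in fact bipartite, or at least has chromatic number at most $\Delta$, by exploiting the distinction between conflicts arising from the $A$-side and those from the $B$-side of each edge, perhaps by orienting each conflict toward the side on which the connecting edge lies and analyzing the resulting digraph.

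Step two, in parallel with step one, is a reducibility analysis on a minimum counterexample $G$ that minimizes $|V(G)| + |E(G)|$ among bipartite graphs with $\chi_s^\prime(G) > \Delta^2$. Standard arguments should force $G$ to be $\Delta$-regular: a vertex of smaller degree gives too few forbidden colors for the incident edges, and a careful count allows those edges to be extended greedily. I would then look for more delicate reducible configurations, for instance a pair of adjacent edges whose combined $2$-neighborhood has size less than $\Delta^2$, or a short cycle ($C_4$ or $C_6$) whose deletion reduces the problem. This step mirrors the structural approach of the present paper, which identifies reducible configurations in the $(3, \Delta)$ setting and extends colorings from carefully chosen subgraphs; the hope is that combining such reductions with the conflict-graph analysis above rules out all putative counterexamples.

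The main obstacle, and the reason the conjecture has resisted proof for decades, is the sharpness at incidence graphs of projective planes, which are $\Delta$-regular bipartite graphs of girth $6$ realizing $\chi_s^\prime(G) = \Delta^2$ exactly. Any successful argument must be tight enough not to lose constants on these extremal examples, yet flexible enough to handle all bipartite graphs. Purely local arguments (greedy colorings, Lov\'asz Local Lemma) and purely structural arguments (reducibility) each seem to fall short by multiplicative constants, and it is likely that a successful attack will combine a global decomposition such as König's theorem with a sharp analysis of the conflict structure on each matching class, using bipartite girth and local geometry to rule out the worst-case overlaps that appear only in projective-plane-like graphs.
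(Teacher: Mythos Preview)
The statement you were asked to address is a \emph{conjecture}, not a theorem: the paper states it as Conjecture~\ref{conj:delta-square} (due to Faudree, Gy\'arf\'as, Schelp, and Tuza) and does not claim to prove it. The paper's own contribution is only Theorem~\ref{thm:main}, the special case of $(3,\Delta)$-bipartite graphs, which confirms the stronger Conjecture~\ref{conj:dA-dB} in that restricted setting. So there is no ``paper's own proof'' to compare against, and your opening sentence --- that this is a well-known open problem and you are offering a strategy rather than a proof --- is exactly the right response.

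On the strategy itself: your Step~1 is the natural first attempt, and you have correctly identified why it stalls. The hope that the conflict graph $H$ on a K\H{o}nig matching is bipartite (or has chromatic number at most $\Delta$) is known to fail in general; already for incidence graphs of projective planes the conflict graph on a perfect matching can contain odd cycles, and there is no evident orientation trick that separates $A$-side conflicts from $B$-side conflicts into a $2$-coloring. Your Step~2 reducibility analysis also runs into the obstacle you name: the projective-plane examples are $\Delta$-regular, girth~$6$, vertex-transitive, and meet the bound exactly, so no purely local reducible configuration can exist. These are precisely the reasons the conjecture remains open, and your write-up reflects an accurate understanding of where the difficulties lie.
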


Steger and Yu \cite{[SY]} confirmed Conjecture~\ref{conj:delta-square}  when the maximum degree is at most $3$. Let $d_A$ and $d_B$ be two positive intergers. A {\em $(d_A, d_B)$-bipartite graph} is a bipartite graph with bipartition $A$ and $B$ such that $\Delta(A) \leq d_A$ and $\Delta(B) \leq d_B$. Brualdi and Quinn Massey \cite{[BQ]} strengthened Conjecture ~\ref{conj:delta-square} to the following.

\begin{conj}
\label{conj:dA-dB} {\rm (Brualdi and Quinn Massey \cite{[BQ]})}
If $G$ is a $(d_A, d_B)$-bipartite graph, then $\chi_s^\prime(G)\leq d_A d_B$. 
\end{conj}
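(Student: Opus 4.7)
The plan is a minimum-counterexample attack. Suppose for contradiction $G$ is an edge-minimum $(d_A, d_B)$-bipartite graph with bipartition $(A,B)$ for which $\chi_s'(G) > d_A d_B$. By minimality, every proper subgraph of $G$ admits a strong edge-coloring with $d_A d_B$ colors, and the strategy is to derive structural restrictions on $G$ so strong that they are incompatible with its existence via discharging.

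For the first step I would count edges that see a given edge $e = uv$ with $u \in A$, $v \in B$. Any such edge either shares an endpoint with $e$ (at most $(d_A - 1) + (d_B - 1)$ options) or is joined to $e$ by a single intermediate edge through $N(u)\setminus\{v\}$ or $N(v)\setminus\{u\}$ (at most $2(d_A-1)(d_B-1)$ options). Hence $e$ is seen by at most $2d_A d_B - d_A - d_B$ others. Whenever this count drops below $d_A d_B$ at some edge, a greedy extension from a coloring of $G-e$ succeeds, contradicting minimality; so $G$ is forced to be locally dense, and low-degree vertices must have large second neighborhoods.

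The core of the argument would be a catalogue of forbidden configurations. For each, one simultaneously uncolors a small set $F$ of edges, colors $G - F$ by minimality, and reduces the extension to $F$ to a \emph{list-edge-coloring} problem on an auxiliary bipartite graph; Galvin's theorem (bipartite list edge chromatic number equals maximum degree) then completes the extension provided a Hall-type defect condition holds on the lists. Prototypical configurations to exclude are: a vertex $u \in A$ with $d_G(u) < d_A$ sharing a neighbor with another low-degree vertex; ``cherries'' $u w v$ with $u,v\in A$ of small combined second-neighborhood; and $4$-cycles with forced color repetitions among incident edges. In the final step, I would assign each $u \in A$ initial charge $d_A - d_G(u)$ and each $v \in B$ charge $d_B - d_G(v)$, for nonnegative total charge $d_A|A| + d_B|B| - 2|E(G)|$. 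Discharging rules push charge from light vertices into their second neighborhoods, and the absence of the catalogued configurations would then force every vertex to a strictly negative final charge, a contradiction.

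The main obstacle — and the reason the conjecture has resisted proof in general — is that the bound $d_A d_B$ is attained by $K_{d_A, d_B}$ and by many $(d_A, d_B)$-biregular graphs of girth at least six, leaving essentially no slack for discharging. Any usable reduction must produce a list-edge-coloring subproblem whose defect is exactly zero on these extremal subgraphs, so the reducible configurations must be surgically tailored. Existing proofs (Steger–Yu for $\Delta \le 3$, and the present paper for $d_A = 3$) exploit the smallness of one side of the bipartition to keep the configuration analysis tractable; for arbitrary $(d_A,d_B)$ the number of local types grows combinatorially, and the extension step loses its last degree of freedom precisely at the dense $K_{d_A,d_B}$-blocks one is trying to handle. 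I expect a complete proof to require either a probabilistic phase — a random partial coloring combined with the Lovász Local Lemma on the residual graph — or a new global decomposition theorem for $(d_A,d_B)$-bipartite graphs, both of which lie beyond a routine application of the framework above.
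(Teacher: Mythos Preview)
The statement you are attempting to prove is Conjecture~\ref{conj:dA-dB}, which is \emph{open} in general; the paper does not prove it and does not claim to. The paper's contribution is Theorem~\ref{thm:main}, the special case $d_A=3$, established by an entirely different mechanism: a $B$-singular decomposition $G=G_1\cup\cdots\cup G_\Delta$ chosen extremally, followed by a greedy $3$-coloring of each $G_i$ in a carefully engineered edge order (triplex, paired, dispersed, then lonely edges via BFS on auxiliary trees $H_i$). No discharging, no minimal counterexample, and no list-coloring / Galvin step appears.

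Your proposal is not a proof but a research outline, and you yourself identify the fatal gap: the extremal examples $K_{d_A,d_B}$ (and biregular graphs of large girth) meet the bound $d_Ad_B$ with zero slack, so any discharging argument must terminate with all final charges \emph{exactly} zero on these graphs, not strictly negative. That means the ``contradiction'' in your last paragraph cannot materialize as stated. Concretely, your edge-count $2d_Ad_B-d_A-d_B$ exceeds $d_Ad_B-1$ whenever $d_A,d_B\ge 2$, so the greedy extension step never fires on its own, and every reduction you list must be paid for by a genuinely nontrivial recoloring argument that you have not supplied. The honest conclusion of your own final paragraph --- that a full proof likely needs a probabilistic phase or a new global decomposition --- is correct; what precedes it is a plausible framework but not a proof, and it bears no resemblance to the method the paper actually uses for the $d_A=3$ case.
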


Note that, the bounds given in Conjectures ~\ref{conj:delta-square} and ~\ref{conj:dA-dB}, if proven, would be tight; as the complete bipartite graph $K_{m,n}$ has strong chromatic index $mn$. 

Nakprasit~\cite{[KN]}  confirmed Conjecture ~\ref{conj:dA-dB} for the class of $(2, \Delta)$-bipartite graphs.  Recently, Bensmail, Lagoutte, and Valicov \cite{[BLV]} proved the following result.

\begin{thm}
\label{thm:4-delta} {\rm (Bensmail {\em et al.}  \cite{[BLV]})} If $G$ is  a $(3, \Delta)$-bipartite graph, then $\chi_s^\prime(G)\leq  4 \Delta$.
\end{thm}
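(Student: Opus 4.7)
My plan is a two-stage coloring that exploits the bounded degree on the $A$-side. In the first stage, since $d_G(a)\leq 3$ for every $a\in A$, I greedily assign each edge $e\in E(G)$ a label $\ell(e)\in\{1,2,3,4\}$ such that the $\leq 3$ edges incident to a common $A$-vertex receive pairwise distinct labels. For $i\in\{1,2,3,4\}$, the subgraph $G_i=\ell^{-1}(i)$ is then a star forest whose centers lie in $B$, with at most $\Delta$ leaves per star.

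In the second stage, I reserve four disjoint palettes $P_1,\ldots,P_4$ of $\Delta$ colors each, and within each $G_i$ I color the edges using $P_i$ so that any two $G_i$-edges receiving the same color do not see each other \emph{in the full graph $G$}. If this succeeds for every $i$, the combined assignment is a strong edge coloring of $G$ using $4\Delta$ colors in total.

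The main obstacle is the second stage. Fix $i$ and an edge $ab\in G_i$. The $G_i$-edges seeing $ab$ in $G$ partition into three groups: (a) the $\leq\Delta-1$ other $G_i$-edges incident to $b$; (b) for each of the $\leq\Delta-1$ vertices $a'\in N_G(b)\setminus\{a\}$, the at most one $G_i$-edge at $a'$ that ends at some $b'\neq b$; and (c) for each of the $\leq 2$ vertices $b'\in N_G(a)\setminus\{b\}$, up to $\Delta-1$ $G_i$-edges at $b'$. In the worst case the conflict graph inside $G_i$ therefore has maximum degree up to $4\Delta-4$, far above the $\Delta-1$ that a $\Delta$-palette can accommodate by greedy coloring. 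So a naively chosen labeling does not suffice.

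The crux of the argument is thus to choose the labeling of the first stage so that, simultaneously for every $i$, the conflict graph on $G_i$ has maximum degree at most $\Delta-1$. I would attempt this via an iterative label-exchange procedure: start from an arbitrary $A$-proper labeling, and as long as some edge violates the conflict bound, perform a local label swap---at a single $A$-vertex, or along a short $\ell$-alternating path or cycle---that strictly decreases a carefully chosen potential, for example the total number of ordered conflicting pairs summed over the four classes. Proving that such a decreasing swap always exists at a violating configuration, and that the procedure therefore terminates at a labeling suitable for greedy refinement, is the most delicate aspect and where the bulk of the combinatorial case analysis would sit.
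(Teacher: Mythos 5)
Your two-stage plan is not a proof but a program, and its decisive step is exactly the part you leave open. Everything up to the difficulty is fine (and your accounting of the conflicts inside a class $G_i$ is essentially correct), but the claim that one can always re-label so that, simultaneously for all four classes, the conflict graph of $G_i$ has maximum degree at most $\Delta-1$ is neither proved nor made plausible: no potential is actually defined, no swap is exhibited, and no argument is given that a violating configuration always admits a strictly decreasing local move. Worse, the target itself looks unattainable. By your own count, groups (a) and (b) together can already contribute $\Delta-1$ conflicts for an edge $ab$ with $\deg_G(b)=\Delta$ (each $a'\in N_G(b)\setminus\{a\}$ may carry a label-$i$ edge, and with only four labels available at $A$-vertices you cannot prevent this), while group (c) is generically nonempty, since the other two neighbors $b',b''$ of $a$ have up to $\Delta$ incident edges spread over only four labels. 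So for large $\Delta$ there will be edges whose conflict degree exceeds $\Delta-1$ under \emph{every} $A$-proper labeling, and the greedy refinement from a $\Delta$-palette cannot be justified by a maximum-degree bound at all; you would need a genuinely different colorability argument for the classes, which is precisely the missing content.

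It is also worth noting that the known proofs, including the one this paper builds on, run the decomposition the other way around: instead of four classes that are proper on the $A$-side, one takes $\Delta$ classes $G_1,\dots,G_\Delta$ in which every $B$-vertex has degree at most $1$ (a $B$-singular decomposition), chosen extremally (first maximizing type-1 vertices, then type-2), and then shows that each class admits a strong partial coloring with $4$ (in fact $3$) colors. The point there is not that conflict degrees are small, but that structural lemmas about the subgraph $H_i$ spanned by the lonely-edges (components are unicyclic, cycles and trees are alternating) allow one to order the edges (triplex, paired, dispersed, then lonely-edges along a BFS of suitably rooted trees) so that each edge sees few \emph{already colored} edges of its class when its turn comes. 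That ordering idea is what your proposal lacks: if you want to salvage the transposed decomposition, you would need an analogous structural theory for your star forests together with a coloring order, not a maximum-degree bound, and you have not provided either.
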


Note that Theorem~\ref{thm:4-delta} gives a weaker bound than what is given in Conjecture ~\ref{conj:dA-dB}.  In the last section of their paper, the authors of ~\cite{[BLV]} pointed out several possible strategies to improve the bound down to $3 \Delta$. Following their suggestions, we prove the following result.

\begin{thm}
\label{thm:main}

If $G$ is a $(3, \Delta)$-bipartite graph, then $\chi_s^\prime(G)\leq3\Delta$.
\end{thm}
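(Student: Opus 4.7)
I would argue by minimum counterexample. Suppose $G$ is a $(3,\Delta)$-bipartite graph with the fewest edges satisfying $\chi_s^\prime(G) > 3\Delta$, and aim to derive a contradiction by exhibiting a reducible substructure in $G$. As a standing greedy bound, for any edge $e = ab$ with $a\in A$ and $b\in B$, the number of other edges seen by $e$ is at most
\[
(d(a)-1) + (d(b)-1) + (d(a)-1)(\Delta-1) + 2(d(b)-1),
\]
counting in turn the edges sharing $a$, the edges sharing $b$, the edges reachable through a neighbor of $a$ other than $b$, and the edges reachable through a neighbor of $b$ other than $a$. When $d(a) = 1$ this quantity is $3(d(b)-1) \le 3\Delta - 3 < 3\Delta$, so by minimality $G - e$ has a strong $3\Delta$-coloring and some color is free for $e$; hence every vertex of $A$ has degree at least $2$ in $G$. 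An analogous deletion handles any edge incident to a degree-$1$ vertex in $B$, since in that case the greedy bound is $(d(a)-1)\Delta \le 2\Delta$.

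The bulk of the argument is a sequence of reducible-configuration lemmas concerning $A$-vertices of degrees $2$ and $3$. For a $2$-vertex $a\in A$ with neighbors $b_1, b_2 \in B$, the edges $ab_1,ab_2$ share a large portion of their seen-edges, so the plan is to delete both, color the smaller graph by minimality, and extend; a careful bookkeeping of available colors shows that extension fails only when the local neighborhood of $a$ is crowded with high-degree vertices, yielding structural constraints that can often be pushed further by another round of local recoloring. For a $3$-vertex $a\in A$ with neighbors $b_1,b_2,b_3$, a single edge at $a$ can see up to $5\Delta - 3$ others, so direct extension is unavailable; instead I would delete an edge (or a pair of edges) at $a$ and recover via a Kempe-type swap between two color classes incident to $a$, with each blocked swap pinning down additional local structure around $a$. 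All resulting forbidden configurations are then combined in a discharging argument, with initial charges based on vertex degree and rules transferring charge across length-two paths through $A$, contradicting the existence of $G$.

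The principal obstacle is the densest case, where every $A$-vertex of $G$ has degree exactly $3$ and every $B$-vertex has degree close to $\Delta$. In this regime no $2$-vertex of $A$ is available to seed the reductions, and the greedy deficit is as large as $2\Delta - 2$ on a single edge. Any successful approach must exploit either a global structural decomposition---for instance a K\"onig $3$-edge-coloring of $G$ from the $A$-side, which partitions $E(G)$ into three matchings $M_1 \cup M_2 \cup M_3$, each a disjoint union of $B$-centered stars---or the parity and overlap structure of forbidden-color multisets along short paths near a fixed $3$-vertex. I expect this case to require the longest and most delicate case analysis and to constitute the technical heart of the theorem.
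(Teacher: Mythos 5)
Your proposal is a plan rather than a proof, and the gap it leaves open is not a residual hard case but the whole problem. The case you defer at the end---every vertex of $A$ of degree exactly $3$ and $B$-degrees close to $\Delta$---is, after a trivial reduction, the general case: one may attach pendant degree-$1$ vertices in $B$ to any $A$-vertex of degree less than $3$, which does not lower the strong chromatic index of the original graph, so a minimum counterexample can be assumed $3$-regular on the $A$-side from the outset. Consequently your degree-$1$ and degree-$2$ reductions, the only steps carried out concretely, eliminate nothing essential, and for the remaining (i.e.\ the entire) problem you offer only candidate tools---Kempe-type swaps, discharging, or a decomposition of $E(G)$ into three classes meeting each $A$-vertex once---without any argument that they close the gap between the greedy bound of roughly $5\Delta-2$ colors and the target $3\Delta$. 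Discharging in particular is ill-suited here: it produces contradictions from sparseness, whereas the extremal instances (e.g.\ graphs close to $K_{3,\Delta}$) are dense and admit $3\Delta$-colorings only through a global construction, not through the absence of local configurations. Kempe swaps on two color classes of a strong edge-coloring are also very weakly controlled, since the relevant auxiliary graph is the square of the line graph, and you give no analysis of the chains.

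For comparison, the paper's proof goes the other way around your last suggestion: instead of splitting $E(G)$ into $3$ classes from the $A$-side, it decomposes $G$ into $\Delta$ spanning subgraphs $G_1,\dots,G_\Delta$ in which every $B$-vertex has degree at most $1$ (a ``$B$-singular decomposition''), chosen extremally (first maximizing type-1 vertices, then type-2 vertices, then minimizing certain cycles among the lonely-edges). Structural lemmas about the graphs $H_i$ spanned by the lonely-edges (each component has at most one cycle, and paths, trees and cycles there are alternating) then allow each $G_i$ to be given a strong partial edge-coloring for $G$ with only $3$ colors, by a greedy procedure in a carefully prescribed order (triplex-, paired-, dispersed-, then lonely-edges along BFS orders of rooted trees, with special handling around the unique cycle). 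Summing over $i$ gives $3\Delta$. Your $A$-side splitting would instead require strongly coloring each class of $B$-centered stars with only $\Delta$ colors relative to $G$, for which you provide no mechanism and which appears no easier than the original statement. As it stands, the proposal has a genuine gap precisely at the technical heart of the theorem.
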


Our proof scheme is very similar to a scheme used in ~\cite{[BLV], [BQ], [SY]}, first introduced in ~\cite{[SY]}.  The scheme consists of using a matrix to describe a special decomposition of the graph.  One minor difference in our approach is that we do not use a matrix, but instead work directly with the decomposition. The main difference in our approach lies in two aspects:  the way we choose the decomposition of $G$ and the order in which the edges are colored. Details on each will be presented in Sections ~\ref{sec:decom} and ~\ref{sec:proof}, respectively. 

The paper is organized as follows. In Section ~\ref{sec:decom}, we define a decomposition of $G$ where $G$ is a $(3, \Delta)$-bipartite graph and we also prove some basic properties of the decomposition.  The main proof is presented in Section~\ref{sec:proof}. Finally in Section~\ref{sec:ext} we talk about some possible extensions of our result.

\section{a decomposition of $G$} \label{sec:decom}


Suppose that $G$ is a $(3,\Delta)$-bipartite graph with bipartition $(A,B)$ with $\Delta(A)\leq3$. Our goal is to show that  $G$ has a strong edge-coloring using at most $3\Delta$ colors.  Nakprasit's theorem \cite{[KN]} implies that the result holds if $\Delta(A) \leq 2$ or $\Delta(B) \leq 2$. So we may assume that $\Delta(A)=3$ and that $\Delta \geq 3$. We may further assume that all vertices of $A$ are of degree exactly $3$ (for otherwise, we may add a number of degree-1 vertices to $B$ and increase the degree of every vertex of $A$ to $3$). 

Now we decompose the graph $G$ into $\Delta$ edge-disjoint spanning subgraphs $G_1, G_2, \cdots, G_{\Delta}$ such that $E= \cup_{i=1} ^{\Delta} E(G_i)$,  and  $d_{G_i}(b) \leq 1$ for each $b \in B$ and for each $i \in \{1, 2, \cdots, \Delta\}$. We call such a decomposition a {\em $B$-singular decomposition} of $G$. 

Let $G_1, G_2, \cdots, G_{\Delta}$ be a $B$-singular decomposition of $G$. For every vertex $a \in A$ and for every $i \in \{1, 2, \cdots, \Delta\}$, we have that $0 \leq d_{G_i} (a) \leq 3$.  Here we will use the notions of type-1, type-2, and type-3 vertices introduced in \cite{[BLV]} and we also require some new notions on the edges of $G$.   

\begin{definition} Let $a$ be a vertex of $A$. 
\label{d1} 
\begin{itemize}
\item If there exists $1 \leq i \leq \Delta$ with $d_{G_i}(a) =3$, then $a$ is called a {\em type-1 vertex}, and the edges incident to $a$ are called {\em triplex-edges}.
\item If there exists $1 \leq i \leq \Delta$ with $d_{G_i}(a)=2$, then $a$ is called a {\em type-2 vertex}, and the two edges of $G_i$ incident to $a$ are called {\em paired-edges}, the edge incident to $a$ that is not in $G_i$ is called a {\em lonely-edge}.

\item If there exist distinct $i$, $j$, and $k$ such that $d_{G_{i}} (a)= d_{G_{j}}(a) = d_{G_{k}} (a) = 1$, then $a$ is called a {\em type-3 vertex}, and the edges incident to $a$ are called {\em dispersed-edges}. 
\end{itemize}
\end{definition}



For each $1\le i\le \Delta$,  let $H_i$ be the induced subgraph of $G$ spanned by the endpoints of all lonely-edges of $G_i$. Note that $H_i$ may contain edges that are not in $G_i$.  Since $G$ is bipartite, a cycle $C$ of $H_i$ must be of even length. Suppose that $|C|=2k$. Then $k$ may be even or odd. Let  $\mathscr{C}=\{C\in C_{2k}:$ $k$ is odd and $C$ is a cycle in $H_i$ for some $1\leq i\leq \Delta\}$. We now choose  a special  $B$-singular decomposition $\mathscr{F}=\{G_1, G_2, \ldots, G_{\Delta}\}$ of $G$ as follows.
 
 \begin{enumerate}
 \item First we maximize the number of type-1 vertices; 
 \item Subject to (1), we maximize the number of type-2 vertices; 
 \item Subject to (1) and (2), we minimize the number of cycles in $\mathscr{C}$.
\end{enumerate}

Condition (3) was not required in Bensmail {\em et al.} ~\cite{[BLV]}. However, we need this condition to deal with one special case.  Note that the decomposition $\mathscr{F}$ may not be unique. 

The next three lemmas were proved implicitly in Bensmail {\em et al.} ~\cite{[BLV]} based on the matrix they used. We state these results in terms of graphs and present a separate proof for each of them.  

\begin{Lemma}
\label{le00} Let $ab$ $(a\in A, b\in B)$ be  a dispersed-edge or a lonely-edge in $G_i$, let $a_1$ be a neighbor of $b$ different from $a$, and let $e$ be an edge incident to $a_1$. If $e\in E(G_i)$, then $e$ is lonely.
\end{Lemma}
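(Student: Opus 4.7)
\bigskip

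\noindent\textbf{Proof proposal.} The plan is to argue by contradiction, assuming $e$ is not lonely, and to exhibit a one-swap modification of $\mathscr{F}$ that strictly increases the number of type-1 vertices, contradicting the choice condition (1).

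First I would fix basic notation. Since $ab \in E(G_i)$ and $\mathscr{F}$ is $B$-singular, the vertex $b$ has degree exactly $1$ in $G_i$, so $a_1 b \notin E(G_i)$; let $j$ be the unique index with $a_1 b \in E(G_j)$, and note $j \neq i$. If $e$ were a triplex-edge, all three edges at $a_1$ would lie in $G_i$; in particular $a_1 b$ would lie in $G_i$, contradicting the previous sentence. So $e$ must be a paired-edge. Then $d_{G_i}(a_1) = 2$, and the unique edge at $a_1$ lying outside $G_i$ is the lonely-edge of $a_1$; since $a_1 b \notin E(G_i)$, this lonely-edge is $a_1 b$ itself.

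Next I would perform the swap that moves $ab$ from $G_i$ to $G_j$ and moves $a_1 b$ from $G_j$ to $G_i$, producing a new decomposition $\mathscr{F}'$. The $B$-singular property is preserved: $b$ still has one edge in each of $G_i$ and $G_j$ (the two edges simply swap roles), and no other vertex of $B$ is touched. In $\mathscr{F}'$, all three edges incident to $a_1$ lie in $G_i$, so $a_1$ is now type-1, whereas in $\mathscr{F}$ it was type-2.

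It remains to verify that the number of type-1 vertices in $\mathscr{F}'$ strictly exceeds that in $\mathscr{F}$. The only other vertex whose $G$-distribution changes is $a$; but because $ab$ is dispersed or lonely, $a$ was type-2 or type-3 in $\mathscr{F}$, hence not type-1 to begin with. A short subcase check, according to whether $j$ coincides with the index carrying the remaining edges of $a$, shows that in $\mathscr{F}'$ the vertex $a$ either keeps its type, moves from type-3 to type-2, or moves to type-1; in no subcase does $a$ lose a type-1 status that it did not have. Combined with the gain from $a_1$, this gives a net increase in the type-1 count and so contradicts condition (1). The only mild obstacle is this last piece of bookkeeping, and it is routine once one observes that $a$ was never type-1 in $\mathscr{F}$; the real content of the proof is simply identifying the correct pair of edges to swap.
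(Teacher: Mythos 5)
There is a genuine gap: your case analysis of what $e$ can be is incomplete. From ``$e$ is not a triplex-edge'' you conclude ``so $e$ must be a paired-edge,'' but by Definition~\ref{d1} an edge of $G_i$ incident to $a_1$ could also be a \emph{dispersed-edge}, namely when $a_1$ is a type-3 vertex whose three edges lie in three distinct classes. Nothing you have said rules this out, and your argument does not cover it: in that situation the swap of $ab$ and $a_1b$ moves $a_1b$ into $G_i$, so $a_1$ ends up with exactly two edges in $G_i$ and one in a third class, i.e.\ $a_1$ becomes type-2, \emph{not} type-1. Hence the contradiction with condition (1) that your proof relies on simply does not materialize in the dispersed case.

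The missing case can be closed with the same swap but a different appeal to the extremal choice of $\mathscr{F}$: after the switch, $a_1$ is upgraded from type-3 to type-2, while $a$ (which was type-2 or type-3, never type-1, since $ab$ is lonely or dispersed) either keeps its type or improves it; so the number of type-1 vertices does not decrease and the number of type-2 vertices strictly increases, contradicting condition (2) of the choice of $\mathscr{F}$ (maximizing type-2 vertices subject to (1)). The paper treats the paired and dispersed possibilities together with exactly this switch; your handling of the paired subcase coincides with the paper's, and the bookkeeping for $a$ that you sketch at the end is fine, but as written the proof establishes the lemma only when $a_1$ happens to be type-1 or type-2, which is not enough.
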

\begin{proof}
 Since $\mathscr{F}$ is a $B$-singular decomposition, $d_{G_i}(b)\leq1$, and hence,  $a_1b\notin E(G_i)$. Since $e\in E(G_i)$, we have that $e\neq a_1b$. It follows that $e$ cannot be a triplex-edge. If $e$ is a paired-edge or a dispersed-edge, then by switching $a_1b$ and $ab$ in the decomposition  $ \mathscr{F}$, we get a $B$-singular decomposition $ \mathscr{F}^\prime$ which has one more type-1 vertices than $\mathscr{F}$, contradicting our choice of $ \mathscr{F}$.  Therefore, $e$ is lonely.
\end{proof}

\begin{Lemma}
\label{le1}
\begin{enumerate}[(1)]
\item Every lonely-edge in $H_i$ belongs to $G_i$;
\item for every two adjacent edges in $H_i$, at least one of them is not a lonely-edge; and
\item  if $v_1v$ and $vv_2$ are two adjacent edges in $H_i$ such that neither is lonely, then $v \in B$.
\end{enumerate}
\end{Lemma}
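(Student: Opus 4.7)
The plan is to prove (1), (2), (3) in order. Parts (1) and (2) will follow quickly from the definitions and from $B$-singularity, while part (3) is the substantive step and will use the maximality condition (1) in the choice of $\mathscr{F}$ via a pair of simultaneous Kempe-type swaps.

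For (1), let $e=a'b'$ be a lonely edge lying in $E(H_i)$. Then $a'\in A\cap V(H_i)$, so $a'$ must be incident to some lonely edge of $G_i$; since $a'$ is type-2 it has a unique lonely edge, which therefore must be $e$, and so $e\in G_i$. For (2), two adjacent lonely edges in $H_i$ share some vertex $u$: if $u\in A$ then $u$ is incident to two lonely edges, contradicting the fact that a type-2 vertex has exactly one lonely edge (and a type-1 or type-3 vertex has none); if $u\in B$, then (1) puts both edges in $G_i$, contradicting $d_{G_i}(u)\le 1$ from $B$-singularity.

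For part (3), suppose for contradiction that $v\in A$. Since $v\in V(H_i)$, $v$ is incident to a lonely edge of $G_i$, so $v$ is type-2 with lonely edge $vb\in G_i$ (for some $b\in B$) and its two paired edges in some $G_j$ with $j\neq i$. Because $v_1v$ and $vv_2$ are non-lonely they must be these paired edges, so $vv_1,vv_2\in G_j$ and $b\notin\{v_1,v_2\}$. Since $v_1,v_2\in V(H_i)\cap B$, there are type-2 vertices $a_1,a_2$ with lonely edges $a_1v_1,a_2v_2\in G_i$; short checks rule out $a_1=v$, $a_2=v$, and $a_1=a_2$, each of which would force some vertex to have two lonely edges. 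Now simultaneously swap colors $i$ and $j$ on the two pairs $\{a_1v_1,vv_1\}$ and $\{a_2v_2,vv_2\}$: move $a_1v_1$ and $a_2v_2$ from $G_i$ into $G_j$, and move $vv_1$ and $vv_2$ from $G_j$ into $G_i$. The only $B$-vertices whose $G_i$- or $G_j$-incidences change are $v_1$ and $v_2$, and at each of them one edge of color $i$ is simply replaced by another edge of color $i$ (similarly for color $j$), so the new decomposition $\mathscr{F}'$ is still $B$-singular. After the swap the edges $vv_1, vv_2, vb$ all lie in $G_i$, making $v$ a type-1 vertex, while the only other affected $A$-vertices, $a_1$ and $a_2$, were already type-2 and remain type-2 or become type-1. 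Thus no previously type-1 vertex is lost and at least one new type-1 vertex ($v$) is gained, contradicting condition (1).

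The main subtlety lies in choosing the correct swap for part (3): the single-pair swap on $\{a_1v_1,vv_1\}$ alone (in the spirit of the proof of Lemma~\ref{le00}) leaves $v$ type-2 and yields no contradiction from (1) or (2), so it is necessary to carry out the swaps at $v_1$ and $v_2$ simultaneously in order to drive all three edges at $v$ into $G_i$ at once. Conditions (2) and (3) in the choice of $\mathscr{F}$ are not needed for this lemma.
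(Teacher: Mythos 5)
Your proof is correct and follows essentially the same route as the paper: parts (1) and (2) from the uniqueness of the lonely-edge at a type-2 vertex together with $B$-singularity, and part (3) by the same simultaneous double switch (exchanging $a_1v_1$ with $vv_1$ and $a_2v_2$ with $vv_2$) that turns $v$ into a type-1 vertex and contradicts the maximality condition (1) on $\mathscr{F}$. You merely spell out details (distinctness of the swapped edges, preservation of $B$-singularity) that the paper leaves implicit.
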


\begin{proof}

Part (1) and part (2) follow immediately from Definition ~\ref{d1} and the definition of $H_i$. 


To prove part (3), we suppose that $v\in A$. By the definition of $H_i$, there are three distinct lonely-edges $e_1, e_2, e\in E(H_i)$ incident to $v_1,v_2$ and $v$, respectively. By switching $e_1$ and $v_1v$, $e_2$ and $vv_2$ in $ \mathscr{F}$, we get another $B$-singular decomposition $\mathscr{F}^\prime$ which has one more type-1 vertices than $\mathscr{F}$, contradicting our choice of $\mathscr{F}$.
\end{proof}

 An {\em alternating cycle} of $H_i$ is a cycle in which for every pair of adjacent edges, exactly one of them is lonely. Similarly,  we define {\em alternating paths}.  A {\em rooted tree} is a pair $(T,r)$ where $T$ is a tree and $r \in V(T)$. The vertex $r$ is called the {\em root} of $(T,r)$. A rooted tree $(T,r)$ in $H_i$ is {\em alternating} if for each vertex $v\in V(T)$, the path from the root $r$ to $v$ is an alternating path.

\begin{Lemma}
\label{le3}
Let $P$ be a path from $b$  to $a$  in $H_i$ where $b\in B, a\in A$ and let $e$ be the edge of $P$ that is incident to $a$.
If $e$ is a lonely-edge, then $P$ is an alternating path.

\end{Lemma}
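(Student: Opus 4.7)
The plan is to traverse $P$ from its $A$-endpoint $a$ back toward $b$ and prove by downward induction that consecutive edges of $P$ alternate between lonely and non-lonely. Setting up coordinates, write $P = v_0 v_1 \cdots v_k$ with $v_0 = b \in B$, $v_k = a \in A$, and $e_j = v_{j-1}v_j$, so the hypothesis says $e = e_k$ is lonely. Bipartiteness of $G$ gives $k$ odd and $v_j \in B$ if and only if $j$ is even; this parity fact is the only global ingredient I will need.

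The induction runs on $j$ from $k$ down to $1$, with base case that $e_k$ is lonely. For the inductive step, assume $e_{j+1}, e_{j+2}, \ldots, e_k$ already form an alternating suffix starting from the lonely edge $e_k$. There are two cases for $e_{j+1}$. If $e_{j+1}$ is lonely, then Lemma~\ref{le1}(2) applied to the pair $e_j, e_{j+1}$ (both edges of $H_i$ since $P \subseteq H_i$) directly forces $e_j$ to be non-lonely. If $e_{j+1}$ is non-lonely, then the alternation of the suffix combined with $e_k$ being lonely forces $k - (j+1)$ to be odd, hence $k - j$ is even, so together with $k$ odd this gives $j$ odd, i.e., $v_j \in A$. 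Now suppose for contradiction that $e_j$ is also non-lonely; then $e_j$ and $e_{j+1}$ are two adjacent edges in $H_i$, neither lonely, meeting at $v_j$, and Lemma~\ref{le1}(3) forces $v_j \in B$, contradicting $v_j \in A$. Hence $e_j$ is lonely, completing the inductive step and showing that consecutive edges of $P$ have opposite types, which is exactly the definition of an alternating path.

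The main (and essentially only) obstacle is the second case: one must combine the bipartite parity with the inductive knowledge that the alternation to the right of $e_j$ starts from a lonely edge at $a$ in order to locate $v_j$ on the $A$-side, which is what makes Lemma~\ref{le1}(3) applicable. This is where the hypothesis that the endpoint edge $e$ incident to $a \in A$ (rather than some lonely edge elsewhere on $P$) is used in an essential way; without it the parity computation would not pin down the bipartition class of $v_j$ and the contradiction would not go through.
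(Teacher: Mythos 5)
Your proof is correct and follows essentially the same route as the paper: walking back from the lonely edge at $a$, using Lemma~\ref{le1}(2) to force the adjacent edge to be non-lonely and Lemma~\ref{le1}(3) at the $A$-vertices to force the next edge to be lonely. The only difference is notational — the paper's labeling $P=b_0a_0b_1a_1\cdots b_na_n$ builds in the bipartition classes, whereas you recover them via an explicit parity computation inside a downward induction.
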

\begin{proof}
Assume that $P=b_0a_0b_1a_1\cdots b_na_n$ where $b_0=b, a_n=a, a_t\in A$ and $ b_t\in B$ for $0\leq t\leq n$.
Since $e=b_na_n$ is lonely, by Lemma~\ref{le1} part (2),  $a_{n-1}b_n$ is not lonely. Now by  Lemma~\ref{le1} part (3), the edge $b_{n-1}a_{n-1}$ is lonely. Repeating these arguments along the path $P$, we get that the path $P$ is alternating.
\end{proof}

We also require the following result of Bensmail, Lagoutte, and Valicov ~\cite{[BLV]} on the structure of $H_i$. We present a proof for completeness of the paper.

\begin{Lemma}
\label{le0}
Every connected component of $H_i$ has at most one  cycle. Moreover, if $C$ is a cycle in a connected component  of $H_i$, then $C$ must be alternating.
\end{Lemma}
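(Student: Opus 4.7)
My plan is to derive both statements by a direct counting argument based on Lemma~\ref{le1}; no swapping or minimality property of $\F$ will be needed.

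For part (1), I would fix a connected component $K$ of $H_i$, set $X=V(K)\cap A$ and $Y=V(K)\cap B$, and first show that the lonely-edges of $G_i$ contained in $K$ form a perfect matching between $X$ and $Y$. Each $x\in X$ is an endpoint of some lonely-edge (by the definition of $H_i$), and that lonely-edge is unique because $x$ has degree $3$ in $G$ and can therefore be type-$2$ for at most one index. Each $y\in Y$ also has a unique lonely-edge at $y$: $B$-singularity of $\F$ gives $d_{G_i}(y)\leq 1$, and Lemma~\ref{le1}(1) forces the unique $G_i$-edge at $y$ to be the lonely-edge witnessing $y\in V(H_i)$. Since both endpoints of each such lonely-edge lie in the same connected component $K$, we get $|X|=|Y|$, $|V(K)|=2|X|$, and $K$ contains exactly $|X|$ lonely-edges. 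Lemma~\ref{le1}(3) then forbids two non-lonely edges of $H_i$ from meeting at any $x\in X$, so the number of non-lonely edges in $K$ is at most $|X|$. Therefore $|E(K)|\leq 2|X|=|V(K)|$, and a connected graph with at most as many edges as vertices has at most one cycle.

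For part (2), write $C=b_0a_0b_1a_1\cdots b_{k-1}a_{k-1}b_0$. At each $a_j$, Lemma~\ref{le1}(2) rules out both incident $C$-edges being lonely, while Lemma~\ref{le1}(3) rules out both being non-lonely; hence exactly one of the two $C$-edges at $a_j$ is lonely, giving $C$ a total of $k$ lonely edges. At each $b_j$, $B$-singularity of $\F$ together with Lemma~\ref{le1}(1) yields at most one lonely $C$-edge incident to $b_j$. Distributing the $k$ lonely edges of $C$ among the $k$ vertices $b_0,\ldots,b_{k-1}$ with multiplicity at most one forces every $b_j$ to meet exactly one lonely edge of $C$. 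Consequently every pair of consecutive edges of $C$ consists of one lonely and one non-lonely edge, which is precisely the definition of an alternating cycle.

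The main delicate step is establishing the perfect-matching structure of the lonely-edges in part (1): one must check that every vertex of $K$ contributes exactly one lonely-edge and that both endpoints of each such edge stay inside $K$. Once that is in place, part (1) reduces to pure edge counting via Lemma~\ref{le1}(3), and part (2) follows by a clean pigeonhole argument from the one-lonely-at-each-$A$-vertex and at-most-one-lonely-at-each-$B$-vertex constraints on $C$.
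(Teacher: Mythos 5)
Your proof is correct, and it takes a genuinely different route from the paper. The paper first proves that every cycle of $H_i$ is alternating by propagating along the cycle with the alternating-path Lemma~\ref{le3}, and then excludes a second cycle by a three-case contradiction argument (two cycles sharing a vertex, sharing a path, or joined by a path). You instead make a global count: the lonely-edges of $G_i$ form a perfect matching on each component $K$ of $H_i$ (exactly one lonely-edge per $A$-vertex because a degree-$3$ vertex can be type-2 for only one index, and exactly one per $B$-vertex by $B$-singularity together with Lemma~\ref{le1}(1)), while Lemma~\ref{le1}(3) allows at most one non-lonely $H_i$-edge per $A$-vertex, so $|E(K)|\leq |V(K)|$ and a connected graph with that property is at most unicyclic; alternation of any cycle then drops out of a local pigeonhole (exactly one lonely $C$-edge at each $a_j$ by Lemma~\ref{le1}(2),(3), at most one at each $b_j$, and counting forces exactly one everywhere). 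What your argument buys is a shorter, case-free proof that bypasses Lemma~\ref{le3} entirely and, as a byproduct, makes explicit that every vertex of $H_i$ meets exactly one lonely-edge and every $A$-vertex has degree at most $2$ in $H_i$; what the paper's route buys is that Lemma~\ref{le3} is developed anyway and reused in later lemmas (e.g.\ Lemmas~\ref{le6} and~\ref{le5}), so its proof of Lemma~\ref{le0} comes almost for free in that framework. One small caveat on your opening remark: the maximality of the decomposition is still used implicitly, since Lemma~\ref{le1}(2),(3) are themselves proved by switching arguments — but invoking them as established lemmas is of course legitimate, and your dependency on them is no heavier than the paper's.
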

\begin{proof}  First we show that all cycles in $H_i$ are alternating.
Let $C$ be a cycle in $H_i$. Assume that $C=b_0a_0b_1a_1\cdots b_na_nb_0$,  where $a_t\in A$,  $ b_t\in B$ for $0 \leq t \leq n$. By Lemma ~\ref{le1} part (3), exactly one of the two edges $b_na_n$ and $a_nb_0$ is a lonely-edge. Without loss of generality, assume that $b_na_n$ is a lonely-edge. By Lemma~\ref{le3}, the path $P=b_0a_0 b_1 a_1 \cdots b_n a_n$ is an alternating path. It follows that $C$ is an alternating cycle.


Next we show that every connected component of $H_i$ has at most one cycle. Suppose by contradiction that there are two cycles $C_1$ and $C_2$ in a connected component of $H_i$. We complete our proof in three cases. In each case, we get a contradiction. 

\medskip
{\em Case 1:}  $C_1$ and $C_2$ share only one vertex $v$. 

Since all cycles in $H_i$ are alternating, $v$ has two incident lonely-edges, contradicting Lemma~\ref{le1} part (2).

\medskip
{\em Case 2:} $C_1$ and $C_2$ share a path $P$ between $v_1$ and $v_2$. 

Since all cycles in $H_i$ are alternating and every vertex of $H_i$ is incident to exactly one lonely-edge,   the edge of $P$ incident to $v_1$ is a lonely-edge.  It follows that the other two edges in $C_1$ and $C_2$ incident to $v_1$ are non-lonely. By Lemma~\ref{le1} part (3), $v_1\in B$. Similarly,  the edge of $P$ incident to $v_2$ is a lonely-edge and $v_2\in B$. Since $v_1, v_2 \in B$, the path $P$ must be of even length; on the other hand, $P$ is an alternating path that starts and ends with a lonely-edge, so $P$ must be of odd length, a contradiction.

\medskip
{\em Case 3:}  $C_1$ and $C_2$ are joined by a path in $H_i$. 

 Let  $P$ be a path from $u_1$ to $u_2$ with  $u_1\in V(C_1), u_2\in V(C_2)$.  Choose this path $P$ to be a shortest one. So,  $d_{H_i} (u_1) \geq 3$. Since all cycles in $H_i$ are alternating, exactly one of the two edges of $C_1$  incident to $u_1$ is lonely.  Therefore $u_1$ is incident to at least two non-lonely-edges. By Lemma~\ref{le1} part (3), $u_1\in B$. Similarly, we conclude that $u_2\in B$.  Let $e= u_2w$ be the lonely-edge on $C_2$ incident to $u_2$  and let $P^\prime = P \cup \{e\}$. By Lemma ~\ref{le3}, $P^\prime$ is an alternating path. In particular, the edge of $P$ incident to $u_1$ is a lonely-edge, a contradiction.
\end{proof}

\begin{Lemma}
\label{le2}
Let $C$ be a cycle in a connected component  of $H_i$. Suppose that $ab\in E(C)$ ($a\in A$ and $ b\in B$) is a lonely-edge. Let $b^\prime$ be the neighbor vertex  of $a$ outside $C$ and  let $e$ be an edge incident to $b^\prime$. If $e\in E(G_i)$, then $e$ is a  triplex-edge.
\end{Lemma}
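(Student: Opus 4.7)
The plan is to argue by contradiction: suppose $e=a^*b'$ with $a^*\in A$ is not a triplex-edge, and derive a violation of one of the extremality conditions (1)--(3) on $\mathscr{F}$. First I would pin down the type of $a$. Since $a$ lies on the cycle $C\subseteq H_i$ and its incident edge $ab\in E(C)$ is lonely in $G_i$, vertex $a$ must be type-2; let $G_j$ (with $j\neq i$) be its paired index, so that both $ab''$ (the other $C$-edge at $a$) and $ab'$ (the unique off-$C$ edge at $a$) belong to $G_j$. In particular $d_{G_i}(a)=1$, which forces $ab'\notin G_i$ and hence $a^*\neq a$.

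The cleanest situation is when $e$ is itself a lonely-edge. Then $a^*$ is type-2 paired in some $G_{j'}\neq i$ with $e\in G_i$ as its lonely counterpart, so both endpoints of $e$ lie in $V(H_i)$; in particular $b'\in V(H_i)$. Together with $a\in V(H_i)$ and the fact that $H_i$ is induced, this forces $ab'\in E(H_i)$. But $ab'$ and $ab''$ are two non-lonely edges of $H_i$ sharing the vertex $a\in A$, directly contradicting Lemma~\ref{le1}(3).

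If instead $e$ is paired or dispersed at $a^*$, so that $a^*$ is type-2 paired in $G_i$ or type-3, I plan to transform $\mathscr{F}$ via a swap that beats it under the ordering (1)--(3). The natural candidate is the three-edge re-indexing $ab\to G_j$, $ab'\to G_i$, $e\to G_j$, which at $b'$ is automatically $B$-singular (since $b'$ merely exchanges its $G_i$- and $G_j$-edges) and at $b$ requires $d_{G_j}(b)=0$. Under this modification $a$ remains type-2 paired in $G_j$, with its lonely-edge in $G_i$ shifting from $ab$ to $ab'$; in particular $ab$ is no longer a lonely-edge, so both $a$ and $b$ leave $V(H_i)$ and the cycle $C$ is destroyed in the new $H_i$. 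A careful case analysis on $a^*$ then shows that either (i) the type-1 or type-2 count strictly increases (when one of $a^*$'s non-$G_i$ incident groups happens to equal $G_j$), giving an immediate contradiction to (1) or (2); or (ii) all type counts are preserved, in which case the net effect is the destruction of $C$ in $H_i$ with no new cycle of length $\equiv 2\pmod 4$ appearing in any other $H_k$, contradicting (3).

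The main obstacle is the subcase where $a^*$ is type-2 paired in $G_i$ (so $e$ is paired at $a^*$): here the swap above downgrades $a^*$ to type-3 and actually loses a type-2 vertex. To recover, I expect to compose the swap with a secondary exchange involving $a^*$'s lonely edge $a^*b_2\in G_l$, designed so that the net type counts are unchanged while the cycle count in $\mathscr{C}$ strictly drops. Verifying that no compensating bad cycle is created in $H_j$ (which now acquires the new lonely edge $ab''$) or in $H_l$, and also handling the auxiliary case $d_{G_j}(b)\geq 1$ (where the blocking edge at $b$ must be displaced via a further chain), will be the most delicate step of the argument.
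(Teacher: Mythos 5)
Your handling of the subcase where $e$ is itself lonely is correct (both endpoints of $e$ then lie in $V(H_i)$, $H_i$ is induced, and Lemma~\ref{le1} part (3) gives the contradiction), but the main case --- $e$ paired or dispersed --- is a plan with admitted holes rather than a proof. Your three-edge re-indexing $ab\to G_j$, $ab'\to G_i$, $e\to G_j$ is only legal when $d_{G_j}(b)=0$, and nothing forces this; you defer it to an unspecified ``further chain.'' Worse, when $e$ is a paired edge of $G_i$ at $a^*$, your swap strictly decreases the number of type-2 vertices, so the new decomposition is not better under the ordering (1)--(3) and you cannot invoke the minimality of $|\mathscr{C}|$; the ``secondary exchange'' meant to repair this is not described, and neither is the verification that no new cycle of length $2k'$ with $k'$ odd appears in $H_i$, $H_j$, or $H_l$ (note that the new lonely edge $ab'\in G_i$ can itself create a cycle in $H_i$). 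So the conditions (2)--(3) are being asked to do heavy, unverified work.

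The missing idea is that conditions (2) and (3) are never needed here: one can promote $a$ to a type-1 vertex directly. Since $C$ is alternating (Lemma~\ref{le0}), the other $C$-neighbor of $a$, call it $b_0$ (your $b''$), is incident on $C$ to a lonely edge $a_0b_0\in E(G_i)$. Switch $a_0b_0$ with $ab_0$ (so $ab_0$ enters $G_i$ and $a_0b_0$ enters $G_j$) and switch $ab'$ with $e$ (so $ab'$ enters $G_i$ and $e$ enters $G_j$). Each affected $B$-vertex ($b_0$ and $b'$) exchanges exactly one incident edge per subgraph, so $B$-singularity is automatic and no constraint at $b$ ever arises, since $ab$ is not moved. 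Now all three edges at $a$ lie in $G_i$, so $a$ becomes type-1; since $e$ is not triplex, $a^*$ was not type-1, and $a_0$ remains type-2 or better, so no type-1 vertex is lost. This contradicts condition (1) alone, uniformly for $e$ lonely, paired, or dispersed, and is precisely the paper's argument.
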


\begin{proof}
 Suppose otherwise that $e$ is not a triplex-edge.  Since $ab$ is a lonely-edge in $G_i$, we know that $ab^\prime\notin E(G_i)$ by the definition of a lonely-edge, and hence, $ab^\prime \neq  e$.  Assume that the vertices $b$, $b^\prime$, and $b_0$ are the three neighbors of $a$. Let $a_0b_0$ be the lonely-edge in $C$ that is incident to $b_0$. By switching $a_0b_0$ and $b_0a$, $ab^\prime$ and $e$ in the decomposition  $ \mathscr{F}$, we get a $B$-singular decomposition $ \mathscr{F}^\prime$ which has one more type-1 vertices than $\mathscr{F}$, contradicting our choice of  $\mathscr{F}$. This proves that $e$ is a triplex-edge.
\end{proof}

\begin{Lemma}
\label{le20}
Let $C_1$ and $C_2$ be two  cycles in $H_i$. If $C_1$ and $C_2$ can be joined by a path $P=a_1v_1vv_2a_2$ in $G$ where $a_t\in A \cap V(C_t)$ and $v_t \notin V(C_1) \cup V(C_2)$  for $t \in 
\{1, 2\}$, then $d_{G_i}(v)\neq 3$.
\end{Lemma}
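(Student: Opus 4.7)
Suppose for contradiction that $d_{G_i}(v)=3$, so $v$ is type-1 and all three edges $vv_1,vv_2,vw$ lie in $G_i$. I will produce a $B$-singular decomposition $\mathscr{F}'$ with strictly more type-1 vertices than $\mathscr{F}$, contradicting condition (1) in the choice of $\mathscr{F}$.

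First, unpack the local structure around $a_1$. Since $a_1\in V(C_1)\subseteq V(H_i)$ and $C_1$ is alternating (Lemma~\ref{le0}), the unique lonely edge at $a_1$ must be a cycle edge; write the two cycle edges at $a_1$ as $a_1b_1\in G_i$ (lonely) and $a_1b_1'$ (paired), and let $j\neq i$ be the common index of $a_1$'s paired edges so that $a_1b_1',a_1v_1\in G_j$. Let $a_0^{(1)}$ be the cycle neighbor of $b_1'$ distinct from $a_1$; the alternating property then forces $a_0^{(1)}b_1'\in G_i$. Define the analogous data on $C_2$: cycle edges $a_2b_2\in G_i$ (lonely) and $a_2b_2'$ (paired), an index $j'\neq i$ with $a_2b_2',a_2v_2\in G_{j'}$, and $a_0^{(2)}b_2'\in G_i$ where $a_0^{(2)}$ is the cycle neighbor of $b_2'$ distinct from $a_2$.

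Construct $\mathscr{F}'$ by simultaneously performing four edge-exchanges, each of which swaps the $G$-labels of two edges incident to a common $B$-vertex: swap the $G_i/G_j$-labels on the pair $\{a_0^{(1)}b_1',a_1b_1'\}$ (meeting at $b_1'$) and on the pair $\{vv_1,a_1v_1\}$ (meeting at $v_1$); swap the $G_i/G_{j'}$-labels on $\{a_0^{(2)}b_2',a_2b_2'\}$ (meeting at $b_2'$) and on $\{vv_2,a_2v_2\}$ (meeting at $v_2$). At each of these $B$-vertices the exchange just permutes two $G$-labels on the only two relevant edges, so $\mathscr{F}'$ is $B$-singular. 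After the swaps, both $a_1$ and $a_2$ have all three incident edges in $G_i$, so each becomes type-1; the vertex $v$ has lost $vv_1$ and $vv_2$ from $G_i$ and is no longer type-1; and the vertices $a_0^{(1)},a_0^{(2)}$ (initially type-2) cannot demote because their paired edges are untouched, so each either stays type-2 or improves to type-1. The net change in the number of type-1 vertices is therefore at least $1+1-1=+1$, contradicting condition (1) and proving $d_{G_i}(v)\neq 3$.

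The main obstacle I expect is bookkeeping the $B$-singular property through four simultaneous swaps; this reduces to the observation that the four vertices $b_1',v_1,b_2',v_2$ are pairwise distinct (using that $V(C_1),V(C_2)$ lie in distinct connected components of $H_i$ and that $v_1,v_2\notin V(C_1)\cup V(C_2)$), and that before the swap each of them has exactly one edge in $G_i$ and one edge in the relevant $G_j$ or $G_{j'}$, so each exchange affects nothing outside the eight listed edges.
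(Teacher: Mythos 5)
Your proposal is correct and is essentially the paper's own argument: the same four label-swaps at $b_1',v_1,b_2',v_2$ turning $a_1$ and $a_2$ into type-1 vertices at the cost of only $v$, contradicting the maximality of the number of type-1 vertices in the choice of $\mathscr{F}$. Your extra bookkeeping (distinctness of the four $B$-vertices, preservation of $B$-singularity, and non-demotion of the vertices $a_0^{(1)},a_0^{(2)}$) only makes explicit what the paper leaves implicit.
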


\begin{proof}
Suppose otherwise that $d_{G_i}(v)=3$. Then by Lemma~\ref{le2}, $v_1v$ and $v_2v$  are triplex-edges. For $t \in \{1,2\}$, let $b_t$ and $b_t ^\prime$ be the two neighbors of $a_t$ on the cycle $C_t$;  assume without loss of generality that $a_t b_t$ is lonely and $a_t b_t ^ \prime$ is not lonely; let $a_t ^\prime b_t ^\prime $ be the lonely-edge on $C_t$ that is incident to $b_t ^\prime$. Now  by switching $a^\prime_1b^\prime_1$ and $a_1b^\prime_1$, $a_1v_1$ and $v_1v$, $a^\prime_2b^\prime_2$ and $a_2b^\prime_2$, $a_2v_2$ and $v_2v$ in the decomposition  $\mathscr{F}$, we get another $B$-singular decomposition $\mathscr{F}^\prime$; in $\mathscr{F} ^\prime$, the two vertices $a_1$ and $a_2$ are now type-1 vertices and the vertex $v$ is no longer a type-1 vertex. So $\mathscr{F} ^\prime$ has  one more type-1 vertices than $\mathscr{F}$, contradicting our choice of $\mathscr{F}$. This proves that $d_{G_i}(v)\neq 3$.
\end{proof}

\begin{Lemma}
\label{le21}
 Suppose that $C= b_0a_0\cdots b_{k-1}a_{k-1}b_0$ is a cycle with length $2k$  in $H_i$ where $a_t\in A$,  $b_t\in B$, $b_ta_t$ is lonely,  and $a_tb_{t+1}$ is not ($0\leq t\leq k-1$ and $t$ is taken modulo $k$). If for some $t$, $a_t$ and $a_{t+1}$  share a common neighbor vertex different from $b_{t+1}$, then $k$ is even.
\end{Lemma}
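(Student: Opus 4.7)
The plan is proof by contradiction using condition (3). Write $L_s := b_s a_s$ and $N_s := a_s b_{s+1}$ for the two $C$-edges at $a_s$, so that $L_s$ is lonely and $N_s$ is not. Suppose $k$ is odd and let $v \in B$ be the hypothetical common neighbor of $a_t$ and $a_{t+1}$ other than $b_{t+1}$. As a preliminary step I would rule out the possibility that $v$ lies on $C$: if $v = b_s$ for some $s \neq t+1$, then $v$ is not cycle-adjacent to at least one of $a_t, a_{t+1}$, so one of the edges $v a_t, v a_{t+1}$ is a chord of $C$ inside $H_i$ (both endpoints belong to $V(H_i)$ via their lonely edges on $C$). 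This chord forces the component of $H_i$ containing $C$ to have a second cycle, contradicting Lemma~\ref{le0}. Therefore $v$ lies off $C$ and is the third $G$-neighbor of both $a_t$ and $a_{t+1}$.

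It follows that $N_t$ and $a_t v$ are the paired edges of $a_t$, lying together in some class $G_{j_t}$ with $j_t \neq i$; similarly $N_{t+1}$ and $a_{t+1} v$ are the paired edges of $a_{t+1}$ in a class $G_{j_{t+1}} \neq G_i$, and $j_t \neq j_{t+1}$ because $v$ has at most one edge per class. I would then form $\mathscr{F}'$ from $\mathscr{F}$ by swapping the colors of two edges at $b_{t+1}$: place $N_t$ into $G_i$ and move $L_{t+1}$ from $G_i$ into $G_{j_t}$. A routine check shows $\mathscr{F}'$ is $B$-singular; in $\mathscr{F}'$, $a_t$ is type-$2$ with paired edges $L_t, N_t \in G_i$ and lonely edge $a_t v \in G_{j_t}$, while $a_{t+1}$ is type-$2$ with lonely edge $L_{t+1} \in G_{j_t}$. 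No other $A$-vertex changes type, so $\mathscr{F}'$ achieves the same maxima as $\mathscr{F}$ in conditions (1) and (2).

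Next I would track the effect on $\mathscr{C}$. In $H_i^{\mathrm{new}}$ the four vertices $a_t, b_t, a_{t+1}, b_{t+1}$ drop out of the vertex set (none of them is incident to a lonely $G_i$-edge any longer), so the cycle $C$, which was in $\mathscr{C}$ because $k$ is odd, is destroyed. In $H_{j_t}^{\mathrm{new}}$ the vertices $a_t, v, a_{t+1}, b_{t+1}$ enter the vertex set as endpoints of the new lonely edges $a_t v$ and $L_{t+1}$, and the four $G$-edges $a_t v$, $v a_{t+1}$, $a_{t+1} b_{t+1}$, $b_{t+1} a_t$ form a $4$-cycle in the induced subgraph $H_{j_t}^{\mathrm{new}}$.

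The main obstacle is to rule out the creation of new members of $\mathscr{C}$. The proofs of Lemmas~\ref{le00}--\ref{le0} rely only on the maximality of the number of type-$1$ vertices, so they apply verbatim to $\mathscr{F}'$. In particular, Lemma~\ref{le0} forces every component of $H_{j_t}^{\mathrm{new}}$ to contain at most one cycle; hence the new $4$-cycle is the unique cycle of its component, no old cycle of $H_{j_t}$ can be absorbed into that component, and every old cycle of $H_{j_t}$ persists intact in a separate component. Since the new $4$-cycle has $k' = 2$ (even) it is not in $\mathscr{C}$, while the destroyed cycle $C$ was. No $H_j$ with $j \notin \{i, j_t\}$ is affected by the swap, so $|\mathscr{C}'| < |\mathscr{C}|$, contradicting the choice of $\mathscr{F}$ via condition (3). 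Therefore $k$ must be even.
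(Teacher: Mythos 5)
Your proof is correct and follows essentially the same route as the paper: the identical switch of $a_tb_{t+1}$ and $a_{t+1}b_{t+1}$ between $G_i$ and $G_{j}$, preservation of the type-1 and type-2 counts, destruction of $C$, and the observation (via Lemma~\ref{le0} applied to $\mathscr{F}'$, which only needs maximality of type-1 vertices) that the only new cycle is the $4$-cycle, contradicting condition (3). Your preliminary step ruling out the common neighbor lying on $C$ is a detail the paper leaves implicit, but it does not change the argument.
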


\begin{proof}
Suppose that $k$ is odd. Recall that $\mathscr{C}=\{C\in C_{2k}:$ $k$ is odd and $C$ is a cycle in $H_i$ for some $1\leq i\leq \Delta\}$ and $\mathscr{F}$ is chosen to maximize the number of type-1 vertices first, and then maximize the number of type-2 vertices, and finally minimize $|\mathscr{C}|$.

Let $b \neq b_{t+1}$ be a common neighbor of $a_t$ and $a_{t+1}$. Since $b_ta_t$ is lonely, for some $j\neq i$, the edges $a_tb_{t+1}$ and $a_tb$ are paired-edges in $G_j$. By switching $a_tb_{t+1}$ and $a_{t+1}b_{t+1}$, we get another $B$-singular  decomposition  $ \mathscr{F}^\prime$.  In $\mathscr{F}^\prime$,  both $a_t$ and $a_{t+1}$ are still type-2 vertices, so $\mathscr{F}^\prime$ have the same set of type-1 vertices and the same set of type-2 vertices as $\mathscr{F}$. However, in $\mathscr{F}^\prime$,  the edges $a_tb$ and $a_{t+1}b_{t+1}$ are lonely-edges in $G_j$, and the cycle $C^\prime$ induced by the endpoints of $a_tb$ and $a_{t+1}b_{t+1}$ is of length $4$.  By Lemma ~\ref{le0}, $C^\prime$ is the only new cycle in $H_j$ with respect to the decomposition $\mathscr{F} ^\prime$.  So in $\mathscr{F}^\prime$,  the cycle $C$ is removed from  $\mathscr{C}$ and no new cycle of length $2k^\prime$ ($k^\prime$ is odd) is added to $\mathscr{C}$.  This is contradicting our choice of $\mathscr{F}$.
\end{proof}

Now we look at a connected component of $H_i$ that has no cycle.

\begin{Lemma}
\label{le6}
Let $Q$ be a connected component of $H_i$.  Suppose that $Q$ has no cycle. Then there exists exactly one vertex $u \in A \cap V(Q)$ such that $d_Q(u)=1$.
\end{Lemma}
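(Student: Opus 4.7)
The plan is to exploit the fact that each vertex of $H_i$ carries a unique lonely-edge, so that the lonely-edges inside $Q$ form a perfect matching of $V(Q)$, and then finish with a one-line degree-sum count.

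First I would note that, since $Q$ is cycle-free, it is a tree. The crucial preliminary observation is that the lonely-edges of $G_i$ contained in $Q$ form a perfect matching of $V(Q)$. Indeed, each $a \in A \cap V(H_i)$ must be a type-2 vertex whose lonely-edge lies in $G_i$: type-1 and type-3 vertices are incident to no lonely-edge at all, while for a type-2 vertex $a$ to appear in $V(H_i)$ its lonely-edge must be the sole edge of $G_i$ at $a$. On the other hand, each $b \in B \cap V(H_i)$ has $d_{G_i}(b) \leq 1$ by the $B$-singularity of $\mathscr{F}$, so the unique edge of $G_i$ at $b$ is automatically a lonely-edge. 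Consequently $|A \cap V(Q)| = |B \cap V(Q)| =: k$, and since $Q$ is a tree on $2k$ vertices, $|E(Q)| = 2k - 1$.

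Next I would bound the degrees of the $A$-vertices in $Q$. Each $a \in A \cap V(Q)$ contributes its unique lonely-edge to $Q$; by Lemma~\ref{le1}(3), any vertex of $H_i$ incident to two non-lonely edges of $H_i$ must lie in $B$, so $a$ can have at most one non-lonely edge in $Q$. Hence $d_Q(a) \in \{1,2\}$ for every $a \in A \cap V(Q)$. Setting $x = |\{a \in A \cap V(Q) : d_Q(a) = 1\}|$ and summing degrees over the $A$-side of the bipartite tree $Q$,
\[
x + 2(k - x) \;=\; |E(Q)| \;=\; 2k - 1,
\]
which forces $x = 1$. I do not anticipate a real obstacle; the only subtlety is correctly identifying the perfect matching of lonely-edges, which drops out of the definitions combined with $B$-singularity.
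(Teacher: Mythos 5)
Your proof is correct, and it takes a genuinely different route from the paper's. The paper argues in two separate steps: existence of a degree-one vertex in $A \cap V(Q)$ by following an alternating walk (if every $A$-vertex of $Q$ had degree $2$, the walk would close up into a cycle), and uniqueness by taking the tree path between two alleged degree-one vertices and invoking Lemma~\ref{le3} to force two lonely-edges at a single vertex, contradicting Lemma~\ref{le1}(2). You instead get both existence and uniqueness in one stroke by a counting argument: the lonely-edges of $G_i$ inside $Q$ form a perfect matching of $V(Q)$ (each $A$-vertex of $H_i$ carries exactly one lonely-edge, and $B$-singularity gives exactly one at each $B$-vertex), so $|A\cap V(Q)|=|B\cap V(Q)|=k$ and the tree $Q$ has $2k-1$ edges; since Lemma~\ref{le1}(3) caps every $A$-vertex at degree $2$ in $Q$ while the matching forces degree at least $1$, the handshake count over the $A$-side yields exactly one vertex of degree $1$. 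Your argument is shorter, avoids Lemma~\ref{le3} entirely, and makes the parity reason for the lemma transparent; the paper's walk/path argument is more in line with the alternating-structure machinery it reuses later (e.g.\ in Lemma~\ref{le5}), but for this particular statement your counting proof is at least as clean and fully rigorous.
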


\begin{proof}
First we show that there exists at least one vertex in $A \cap V(Q)$ with degree 1 in $Q$.  By  Lemma~\ref{le1} part (3), no vertex in $A \cap V(Q)$ can have degree 3 in $Q$. So we assume that every vertex in $A \cap V(Q)$ has degree two in $Q$.  Let $a_0$ be an arbitrary vertex in $A \cap V(Q)$. Then in $Q$,  the vertex $a_0$ is incident to a lonely-edge and a non-lonely-edge. Let $a_0 b_0$ be the non-lonely-edge incident to $a_0$.  Since $b_0 \in V(Q)$, the vertex $b_0$ is incident to a lonely-edge, say $b_0 a_1$. Since $d_Q(a_1) \neq 1$,  by Lemma~\ref{le1} part (2), $a_1$ is also incident to a non-lonely-edge, say $a_1b_1$; by repeating this process, we get an alternating walk $a_0 b_0  a_1 b_1 \cdots$ in $Q$. Since $Q$ is a finite graph, there exists integers $l < m$ such that $a_l = a_m$. Therefore, $Q$ has a cycle, a contradiction.

Next we assume that $u_1, u_2 \in A \cap V(Q)$ are both vertices of degree 1 in $Q$. For $t \in \{1,2\}$, let $u_tv_t$ be the lonely-edge incident to $u_t$. Let $P$ be the path in $Q$ from $u_1$ to $u_2$. Since $u_1$ and $u_2$ both have degree 1 in $Q$, the vertices $v_1$ and $v_2$ are in $V(P)$.  Let $P^\prime=P-\{u_1v_1\}$. Since $u_2v_2$ is lonely, by Lemma~\ref{le3}, $P^\prime$ is alternating and the edge incident to $v_1$ in $P^\prime$ is lonely.  So $v_1$ is incident to two lonely-edges, contradicting Lemma~\ref{le1} part (2).
\end{proof}

Let $Q$ be a connected component of $H_i$. We now define a rooted tree $(T,r)$ as follows. On the one hand, if $Q$ has no cycle, then by Lemma~\ref{le6}, there exists a unique vertex $u \in A\cap V(Q)$ of degree 1 in $Q$. Let $r\in V(Q)$ be the unique neighbor of $u$ in $Q$. Clearly $r \in B$.  We define the rooted tree $(T,r) = (Q, r)$. 
On the other hand, if $Q$ has a cycle $C$. By Lemma~\ref{le0},  each connected component $Q^\prime$ of $Q-E(C)$ is a tree. Since $Q$ has exactly one cycle, $Q^\prime$ meets $C$ at exactly one vertex $r$. For each nontrivial connected component $Q ^\prime$, we define a rooted tree $(T,r)=(Q', r)$ where $r \in V(C)$.

\begin{Lemma}
\label{le5}
In the rooted tree  $(T,r)$ defined above, each of the following holds.
\begin{enumerate}[(1)]
\item The root $r$ is in  $B$.
\item $T$ is alternating.
\item All leaves of the tree $T$, except $u$, are in  $B$.
\item For each lonely-edge $ab\in $E(T) where $a\in A \backslash \{u\}$, we have that $b$ is the only child of $a$.
\end{enumerate}
\end{Lemma}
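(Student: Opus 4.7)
The plan is to establish the four parts in the order (1), (3), (2), (4), since (2) and (4) both rest on the structural information provided by (3).

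Part (1) is quick. In the cycle-free case, $r$ is the neighbor of $u \in A$ in $Q$, so $r \in B$ by bipartiteness. In the cycle case, suppose for contradiction $r \in A$. Then since the cycle $C$ is alternating (Lemma~\ref{le0}), one of the two $C$-edges incident to $r$ is non-lonely; and the edge from $r$ into the nontrivial subtree $T$ is also non-lonely, because $r$'s unique lonely-edge lies on $C$. This yields two adjacent non-lonely edges at $r$ in $H_i$, contradicting Lemma~\ref{le1}(3).

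For part (3) in the cycle-free case, $T = Q$, and Lemma~\ref{le6} together with Lemma~\ref{le1}(3) (which caps $A$-degrees in $H_i$ at $2$) force every $A$-vertex other than $u$ to have degree exactly $2$, so the only $A$-leaf is $u$. For the cycle case I plan a counting argument. Each $v \in V(T) \setminus \{r\}$ lies outside $V(C)$, so its unique lonely-edge stays inside $T$, while $r$'s lonely-edge is a $C$-edge and so lies outside $T$. Thus the lonely-edges of $T$ form a perfect matching on $V(T) \setminus \{r\}$, giving $|V(T)| = 2a + 1$ with $a = |A \cap V(T)|$; subtracting from $|E(T)| = |V(T)| - 1 = 2a$, the number of non-lonely edges in $T$ is also $a$. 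Since each $A$-vertex has at most one non-lonely incident edge (Lemma~\ref{le1}(3)), summing on the $A$-side forces every $A$-vertex of $V(T)$ to have degree exactly $2$, so again no $A$-vertex is a leaf.

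Part (2) is the main obstacle, because a priori a path could pass through a $B$-vertex via two adjacent non-lonely edges and break alternation. I plan to prove (2) together with a strengthening that pins down the type of each parent-edge, by induction on the depth $k$ of $v$ in $T$: the path from $r$ to $v$ is alternating, and the parent-edge of $v$ is \emph{lonely} when $v \in B$ with $k \geq 2$, \emph{non-lonely} when $v \in A \setminus \{u\}$, and \emph{lonely} when $v = u$ (cycle-free case). The base case $k = 1$ is handled by direct inspection of $r$'s neighbors: in the cycle-free case the only lonely edge at $r$ is $ru$, while in the cycle case all edges at $r$ in $T$ are non-lonely since $r$'s lonely-edge lies on $C$. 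The inductive step chains the implications: at an $A$-vertex $p \neq u$, which has degree $2$ by part (3), a non-lonely parent-edge forces its unique child-edge to be lonely; at a $B$-vertex $p$ with $k \geq 2$, a lonely parent-edge forces every child-edge to be non-lonely by Lemma~\ref{le1}(2).

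Part (4) then falls out. Let $ab$ be a lonely-edge of $T$ with $a \in A \setminus \{u\}$. By part (3), $d_T(a) = 2$, so $a$ has exactly one parent-edge and exactly one child-edge. By part (2), the parent-edge of $a$ is non-lonely, so $ab$ must be the child-edge. Hence $b$ is a child of $a$, and by the degree count $b$ is the only child. The technical heart of the lemma is the coupled induction of (2), where one must simultaneously track the alternation of the path and the specific type of the parent-edge at every depth to prevent alternation from being broken at a $B$-vertex with several adjacent non-lonely edges.
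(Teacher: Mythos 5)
Your proposal is correct, but it takes a genuinely different route from the paper's. The paper proves parts (2)--(4) by contradiction through Lemma~\ref{le3}: any violation produces a path from $r$ ending in a lonely edge at an $A$-vertex, so Lemma~\ref{le3} forces the first edge $ra_0$ to be lonely; since the root already carries a lonely edge outside $T$ (on the cycle $C$, or $ru$ in the acyclic case), this puts two lonely edges at $r$, contradicting Lemma~\ref{le1}(2). You avoid Lemma~\ref{le3} entirely: for (3) in the cycle case you note that the lonely edges of $T$ form a perfect matching of $V(T)\setminus\{r\}$ and count the non-lonely edges against Lemma~\ref{le1}(3) to force every $A$-vertex of $T$ to have degree exactly $2$ (one lonely and one non-lonely tree-edge), which is in fact stronger than (3); for (2) you run a root-to-leaf induction carrying a strengthened invariant (parent-edge lonely at $B$-vertices of depth at least $2$, non-lonely at $A$-vertices other than $u$), and (4) then falls out immediately. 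Both arguments are sound; yours is somewhat longer but yields more explicit structural information (each lonely edge at an $A$-vertex goes to its child), which is precisely the orientation fact exploited by the coloring arguments in Section~\ref{sec:proof}, whereas the paper's proof is more economical because it reuses Lemma~\ref{le3}. Two small bookkeeping points: the bound $d_{H_i}(a)\le 2$ for $a\in A$ needs Lemma~\ref{le1}(2) (at most one incident lonely edge) in addition to Lemma~\ref{le1}(3); and in part (4) the appeal ``by part (2)'' should explicitly invoke the strengthened invariant from your induction, since alternation of $T$ alone does not identify the type of the parent-edge of $a$.
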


\begin{proof}
(1) By our definitions of $(T,r)$, we may assume that $Q$ has a cycle $C$ and $T=(Q ^\prime, r)$ with $r \in V(C)$.  
 Let $v_1$ be a child of $r$. Since $r\in V(C)$, the vertex $r$ is incident to a lonely-edge  and a non-lonely-edge in $C$. By the definition of lonely-edge, $rv_1$ is non-lonely. So $r$ is incident to two non-lonely-edges. By Lemma~\ref{le1} part (3), $r\in B$.

(2) Suppose otherwise that $T$ is non-alternating. Then there exists  a vertex $v\in V(T)$ such that the path $P$ from the root $r$ to $v$ is non-alternating. By Lemma~\ref{le1} part (2), there can not be two adjacent lonely-edges in $H_i$. Therefore, there must be two adjacent  edges $e_1$ and $e_2$  with the common vertex $w$ in the path $P$ such that neither $e_1$ nor  $e_2$ is lonely. By Lemma~\ref{le1} part (3), $w\in B$. Since $r\in B$, without loss of generality, let $P=b_0a_0b_1a_1\cdots b_{n-1}a_{n-1}b_na_n\cdots v$ where $b_0=r, a_{n-1}b_n=e_1, b_na_n=e_2$, and $b_n=w$. By Lemma~\ref{le1} part (3), the edge $b_{n-1}a_{n-1}$ is a lonely-edge. By Lemma~\ref{le3}, the sub-path of $P$ from $r$ to $a_{n-1}$ is an alternating path, in particular, the edge $r a_0$ is a lonely-edge. Now the vertex $r$ is incident to two lonely-edges in $H_i$, contradicting Lemma~\ref{le1} part (2). This proves that $T$ is alternating.

(3) Suppose otherwise that there exists a leaf $l\neq u $ of the tree $T$ such that  $l\in A$. Since $r\in B$, without loss of generality, we denote the path from the root $r$ to $l$ by $P=b_0a_0b_1a_1\cdots b_{n-1}a_{n-1}b_na_n$ where $b_0=r, a_n=l$. Since $a_n$ is a leaf,  $b_na_n$ is lonely. By Lemma~\ref{le3},  $P$ is alternating and the edge $b_0a_0 =ra_0$ is lonely. Now $r$ is incident to two lonely-edges in $H_i$, a contradiction. Therefore, all leaves of the tree $T$, except $u$, are in $B$.

(4) Since $a\in A$ and $a\neq u$, we know that $a$ is not a leaf of the tree $T$ by $(3)$. So, $a$ has at least one child. Assume that $a$ has two children.  Since $a\in A$ and $r\in B$, clearly $a\neq r$. So, $a$ has a parent. It follows that the three edges incident to $a$ are all in $H_i$. So in $H_i$, the vertex $a$ is incident to two non-lonely-edges, contradicting Lemma~\ref{le1} part (3). Therefore, $a$ has only one child.

Next we show that $b$ is the child of $a$. Suppose otherwise that $b$ is the parent of $a$.
Since $r\in B$, without loss of generality, we denote the path from the root  $r$ to $a$  by $P=b_0a_0\cdots b_na_n$ where $b_0=r, b_n=b, a_n=a$. Since $b_na_n=ba$ is lonely,  $P$ is alternating and the edge $b_0a_0$ is lonely by Lemma~\ref{le3}.  Once again, $r$ is incident to two lonely-edges in $H_i$, a contradiction. It follows that $b$ is the only child of $a$.
\end{proof}

\section{proof of theorem ~\ref{thm:main}} \label{sec:proof}

In this section we prove Theorem ~\ref{thm:main}. Let $G ^\prime$ be a subgraph $G$.  We say that $G ^\prime$ has a {\em strong partial edge-coloring} for $G$ if we can assign colors to all edges of $G^\prime$ such that every pair of edges that can see each other in $G$ receive different colors.  To prove Theorem~\ref{thm:main}, it is sufficient to show that, for each $i \in \{1, 2, \cdots, \Delta\}$, the graph $G_i$ has a strong partial edge-coloring for $G$ using at most three colors. 

We first solve the case when $H_i$ has no cycle. Here we would like to point out that the coloring scheme used in the next lemma is precisely the same as the scheme used in ~\cite{[BLV]}; the only difference is that they use an arbitrary vertex as the root (see Phrase 2, Page 7 in ~\cite{[BLV]}) while we choose a very special vertex to be the root.  This way we avoid using the extra color that was required in ~\cite{[BLV]}.  

\begin{Lemma} \label{lemma:no-cycle}
If the graph $H_i$ has no cycle, then the graph $G_i$ has a strong partial edge-coloring  for $G$ using at most three colors.
\end{Lemma}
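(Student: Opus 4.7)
My plan is to adapt the BFS-based coloring scheme of~\cite{[BLV]}, using the special root choice guaranteed by Lemma~\ref{le5}. Since $H_i$ has no cycle, each of its connected components is a tree; fix one such component $T$, rooted at the vertex $r \in B$ which is the unique $B$-neighbor of the $A$-leaf $u$ in $T$ (Lemma~\ref{le6}). By Lemma~\ref{le5}, the rooted tree $(T, r)$ is alternating: $ur$ is the unique lonely-edge incident to $r$, and each non-$u$ $A$-vertex $a$ in $T$ has its lonely-edge $ab$ going to its unique child $b$. Note also that since $H_i$ is a tree, for each $a \in V(H_i) \cap A$ the third $B$-neighbor $t_a$ of $a$ (i.e., the $B$-neighbor of $a$ in $G$ other than the lonely-endpoint $b$ and the parent $p_a$) must lie outside $V(H_i)$, for otherwise $a t_a$ would be an edge of $H_i$ creating a cycle.

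The coloring proceeds in two phases. In Phase~1, I color the lonely-edges of $G_i$ in each tree component of $H_i$ using only the colors $\{1, 2\}$, via BFS from the corresponding root. Assign $ur$ color~$1$; then process each subsequent lonely-edge in BFS order, coloring it to differ from its \emph{grandparent} lonely-edge, namely the lonely-edge at $p_a$. By the tree structure, the grandparent lonely-edge is the unique previously-colored lonely-edge conflicting with $ab$: any other potential conflict would require a $G$-edge between two vertices of $V(H_i)$ creating a cycle in $H_i$, contradicting our tree assumption. Hence two colors suffice in Phase~1.

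In Phase~2, I color the non-lonely $G_i$-edges: triplex-edges at type-$1$ vertices, paired-edges at type-$2$ vertices whose paired-pair lies in $G_i$, and dispersed-edges at type-$3$ vertices. Triplex-edges at distinct type-$1$ vertices do not mutually conflict (since a conflict would force $d_{G_i}(b) \geq 2$ for some $b$), so each type-$1$ vertex's triplex can independently use a permutation of $\{1, 2, 3\}$. The remaining work is to choose these permutations, and to color the paired and dispersed edges, compatibly with the Phase~1 coloring. The key structural observation, obtained by case analysis similar to Lemma~\ref{le00}, is that a non-lonely edge $a^* x \in G_i$ conflicts with a lonely-edge $ab$ precisely when $x = t_a$ for some type-$2$ vertex $a \in V(H_i) \cap A$; this localized pattern of conflicts, combined with the fact that Phase~$1$ used only the two colors $\{1, 2\}$, leaves enough flexibility to complete the coloring with only the three colors~$\{1,2,3\}$.

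The main obstacle lies in Phase~2, specifically in verifying that each type-$1$ vertex's triplex can be assigned three distinct colors avoiding the forbidden set induced by Phase~$1$. A triplex-edge $a^* t$ inherits a forbidden color only from lonely-edges $ab$ with $t = t_a$, and those colors lie in $\{1, 2\}$. The special choice of root $r$ as the $B$-neighbor of the $A$-leaf $u$ is essential here: it forces $ur$ to be colored first and alone, and makes Phase~$1$ colors propagate in a manner that avoids the pathological configuration where a triplex-edge has both colors~$1$ and~$2$ forbidden at all three of its leaves simultaneously. With an arbitrary root, as in the generic scheme of~\cite{[BLV]}, this pathology can occur and forces the use of a fourth color.
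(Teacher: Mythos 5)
Your Phase~1 is sound: since $H_i$ is an induced subgraph, two lonely-edges of $G_i$ see each other only through an edge of $H_i$, so within each tree the only previously-colored conflict in BFS order is the lonely-edge at the parent, and two colors suffice. The gap is exactly where you yourself locate ``the main obstacle'': Phase~2 is asserted, not proved. The special root of Lemma~\ref{le6} only controls the \emph{order} in which lonely-edges inside one tree get colored (parents before children); it gives no control over \emph{which} of the colors $1,2$ a given lonely-edge receives, and lonely-edges in different components of $H_i$ (or far apart in one component) are colored independently up to swapping $1$ and $2$. Nothing in Lemmas~\ref{le00}--\ref{le5} prevents a type-$1$ vertex $a^*$ having two neighbors $x_1,x_2$ such that each $x_j$ is the third neighbor $t_a$ of two type-$2$ vertices whose lonely-edges in $G_i$ received colors $1$ and $2$ respectively; then two of the three triplex-edges at $a^*$ each have both $1$ and $2$ forbidden, and no bijection onto $\{1,2,3\}$ exists. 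The situation is even worse for the edges you gloss over: a dispersed-edge $ab\in G_i$ sees, through the other neighbors of $b$, only lonely-edges of $G_i$ (Lemma~\ref{le00}) --- which in your ordering are \emph{already colored} and may carry both colors $1$ and $2$ --- and it may additionally see a Phase-$2$ edge at a neighbor of $a$ colored $3$, so all three colors can be forbidden. Your closing claim that the root choice ``makes Phase-1 colors propagate'' so as to avoid these pathologies is unsupported; the greedy rule ``differ from the parent'' propagates colors by parity along each tree and cannot coordinate colors across the far-apart lonely-edges that meet at a common third neighbor.

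The paper avoids this entirely by coloring in the opposite order: triplex-, paired-, and dispersed-edges are colored greedily first (each seeing at most two colored $G_i$-edges), and the lonely-edges are colored \emph{last}, in BFS order from the special root. For a lonely-edge $ab$, Lemma~\ref{le00} says every $G_i$-edge at a neighbor of $b$ is lonely, and Lemma~\ref{le5} together with the root choice says it sits at a child of $b$, hence is still uncolored; so the only colored conflicts are the at most two $G_i$-edges at the other two neighbors of $a$, and three colors suffice greedily. In other words, the point of the special root is an ordering property that makes the last-colored edges cheap, not any control over how the colors $1$ and $2$ are distributed; reversing the phases as you do replaces a two-conflict greedy step by a global constraint-satisfaction problem for the triplex, paired, and dispersed edges that your sketch does not solve.
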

\begin{proof}  Let $Q$ be a connected component of $H_i$. Then $Q$ is a tree. By Lemma~\ref{le6}, there is a unique vertex $u \in A \cap V(Q)$ such that $d_Q(u)=1$. Let $r\in V(Q)$ be the unique neighbor of $u$ in $Q$ and let the rooted tree $T_Q= (Q, r)$. We use the greedy coloring algorithm to color the edges in $G_i$ in the following order: first color all triplex-edges, then all paired-edges, then all dispersed-edges; finally for all lonely-edges, color them in the order as they are encountered during a Breadth-First Search (BFS) algorithm performed on $T_Q =(Q,r)$ for every component $Q$ of $H_i$.  We show that this coloring procedure requires only three colors. Let $e$ be an arbitrary edge of $G_i$.

If $e$ is a triplex-edge, then clearly in $G_i$, the edge $e$ sees at most two other edges that are already colored; i.e., the two edges adjacent to $e$, so all triplex-edges can be colored using three colors.

Now let $e=ab$ $(a\in A, b\in B)$ be a paired-edge and let $b^\prime$ and $b^{\prime\prime}$ be the two other neighbors of $a$. Assume $ab^\prime$ is a paired-edge and $ab^{\prime\prime}$ is a lonely-edge. Let $a^\prime \neq a$ be a neighbor of $b$. Since $ab \in E(G_i)$, we have that $a^\prime b \notin E(G_i)$. So, $a ^\prime$ cannot be incident to triplex-edges in $G_i$. Furthermore, $a ^\prime$ cannot be incident to two pair-edges in $G_i$, as otherwise, we may switch $ab$ and $a^\prime b$ and the resulting decomposition would have one more type-1 vertex $a ^\prime$, contradicting our choice of $ \mathscr{F}$. Therefore, in $G_i$,  the edge $e$ sees $ab^\prime$ and possibly another triplex-edge incident to $b^{\prime\prime}$, so all paired-edges can be colored using at most three colors.

Next let $e=ab$ where $a\in A, b\in B$ be a dispersed-edge and let  $b^\prime$ and $b^{\prime\prime}$ be the two other neighbors of $a$.  For every neighbor $a^\prime \neq a$ of $b$,  if $a^\prime$ is incident to an edge $e ^\prime$ in $G_i$, then by Lemma ~\ref{le00}, $e^\prime$ is a lonely-edge, and hence, it is not yet colored.  Therefore, in $G_i$, the edges that $e$ can see and that were already colored must be incident to $b^\prime$ or $b^{\prime\prime}$; and there are at most two such edges. So all dispersed-edges can be colored using at most three colors.

Finally let $e=ab$ $(a\in A, b\in B)$  be a lonely-edge in a tree $T_Q$ where $Q$ is a connected component of $H_i$.
Let $a^\prime \neq a$ be  a neighbor of $b$. Let $e^\prime \in E(G_i)$ be an edge incident to $a ^\prime$.  By Lemma~\ref{le00},  $e^\prime$ is lonely.  If $b$ is not the root of $T_Q$, then by Lemma ~\ref{le5}, $b$ is the child of $a$. By the construction of the rooted tree $T_Q$,  we know that $a^\prime$ is a child of $b$. So, $e^\prime$ is not yet colored.
 If $b$ is the root of $T_Q$, then $a$ is a leaf of $T_Q$ and $a^\prime$ is a child of $b$. So we also get that $e^\prime$ is not yet colored.
 It follows that an edge of $G_i$ that $e$ can see must be incident to one of the two other neighbors of $a$, and hence,  there are at most two such edges.  Therefore, all lonely-edges can be colored using at most three colors.
\end{proof}

The case when $H_i$ has a cycle is more involved. In Bensmail {\em et al.} \cite{[BLV]}, all triplex-edges are colored in an arbitrary order. We will require a special ordering on all triplex-edges that are of distance 1 from a cycle in $H_i$ and a special ordering on all lonely-edges.  Let $a$ be a type-1 vertex. Then by Lemma~\ref{le20}, at most one of the three triplex-edges incident to $a$ can be at distance 1 from a cycle in $H_i$. Therefore, we may assume that $H_i$ is connected. Let $C$ be the unique cycle in $H_i$.   Assume that $C=b_0a_0b_1a_1\cdots b_{k-1}a_{k-1}b_0$,  where $a_t\in A$, $b_t\in B$, the edge $b_ta_t$ is lonely,  and the edge $a_tb_{t+1}$ is non-lonely ($t$ is taken modulo $k$). For each $a_t$, let $b_t ^\prime$ be the neighbor of $a_t$ that is not on the cycle $C$.  If there exists $0 \leq t < k$ such that $b_t ^\prime$ is not incident to a triplex-edge in $G_i$, then we assume without loss of generality that $b_0^\prime$ is not incident to a triplex-edge in $G_i$. For each $0 \leq t <k$, let $T_t =(T,r)$ be the rooted tree where  $T$ is a connected component of $H_i-E(C)$ and $r=b_t \in V(T) \cap V(C)$.

Now we describe a coloring procedure for $G_i$.
 If $b_0 ^\prime$ is not incident to a triplex-edge in $G_i$,
 or $b_0^\prime= b_1^\prime=\cdots= b_{k-1} ^\prime$, then we color all triplex-edges greedily in an arbitrary order.

Next assume that each $b_t ^ \prime$ is incident to a triplex-edge in $G_i$ and there exists $j$ such that $b_j  ^\prime \neq b_{j+1} ^\prime$. Assume without loss of generality that $b_0 ^\prime \neq b_1 ^\prime$. For $k \in \{0, 1\}$, let $e_k^\prime$ be the triplex-edge in $G_i$ that is incident to $b_k ^\prime$.  Note that since the graph is bipartite, $e_0 ^\prime \neq e_1^\prime$ (but $e_0^\prime$ may be adjacent to $e_1^\prime$).

In the case where $e_0^\prime$ and $e_1^\prime$ are adjacent, let $e_0 ^{\prime\prime}$ be the other triplex-edge adjacent to $e_0^\prime$ and $e_1^\prime$. We will first color the edges $e_0^\prime$, $e_1^\prime$, and $e_0 ^{\prime\prime}$ greedily in the given order, then color all other triplex-edges.

 In the case where $e_0^\prime$ and $e_1^\prime$ are not adjacent, we let $e_t^{\prime\prime}$ and $e_t ^{\prime\prime\prime}$ be the two triplex-edges adjacent to $e_t^\prime$ where $t \in \{0, 1\}$. We will first color the edges $e_0^\prime$, $e_0 ^{\prime\prime}$, $e_0 ^{\prime\prime\prime}$, $e_1 ^{\prime\prime}$, $e_1^\prime$, $e_1 ^{\prime\prime\prime}$ greedily in the given order, then color all other triplex-edges.

Once all triplex-edges are colored, we color all paired-edges, then all dispersed-edges. Finally for all lonely-edges, we color them in the following order:  color the lonely-edges on the cycle $C$ in the order $b_1a_1$, $b_2a_2$, $\cdots$, $b_{k-1} a_{k-1}$, $b_0a_0$;  note that, for $t \neq t^\prime$, the lonely-edges in $T_t$ do not see the lonely-edges in $T_{t^{\prime}}$,  so we may color the rooted trees $T_t$'s in an arbitrary order, but within each rooted tree $T_t$, 
the lonely-edges in $T_t$ are colored in the order determined by the Breadth-First Search (BFS) algorithm starting at the root $b_t$.

\begin{Lemma} \label{lemma:has-cycle}

For each $G_i$, the coloring procedure described above produces a strong partial edge-coloring for $G$ using at most three colors.
\end{Lemma}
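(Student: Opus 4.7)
The plan is to verify, stage-by-stage along the prescribed coloring order, that every edge $e \in G_i$ at the moment it is colored sees at most two previously-colored $G_i$-edges in $G$; greedy choice will then succeed with three colors.

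For the triplex, paired, and dispersed stages, the argument is essentially that of Lemma \ref{lemma:no-cycle}. For a triplex $e = ab$, any $a' \in N_G(b) \setminus \{a\}$ has $a'b \notin G_i$ (by $B$-singularity at $b$), so $a'$ is not type-1 in $G_i$ and therefore carries no triplex-edge of $G_i$; hence $e$ sees only the two other triplex-edges at $a$. For paired and dispersed edges, Lemma \ref{le00} forces any $G_i$-edge at $a' \in N_G(b) \setminus \{a\}$ to be lonely (hence not yet colored), so only $G_i$-edges at the two other $B$-neighbors of $a$ can already be colored. Note that the special orderings of triplex-edges near the cycle only control \emph{which} colors are eventually assigned; the basic count-of-constraints argument is unaffected by them.

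In the cycle-lonely stage we color $b_1a_1, b_2a_2, \ldots, b_{k-1}a_{k-1}, b_0a_0$ in this order. For each $1 \leq t \leq k-1$, the already-colored $G_i$-edges that $b_ta_t$ can see are at most $b_{t-1}a_{t-1}$ (when $t \geq 2$) and at most one triplex $e_t^*$ incident to $b_t'$: Lemma~\ref{le2} forces any $G_i$-edge at $b_t'$ to be a triplex-edge, and Lemma~\ref{le00} confines any $G_i$-edge at the other $A$-neighbors of $b_t$ to be a lonely-edge that will lie in a tree component and be colored later. Hence $b_ta_t$ has at most two already-colored constraints. For tree lonelies, I follow the argument of Lemma \ref{lemma:no-cycle} using Lemma \ref{le5}(4): the lonely-edge at an $A$-vertex always points down to a unique child, which guarantees that the relevant $a'$-neighbors in the BFS order have not yet been processed.

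The main obstacle is $b_0a_0$, which a priori can see three already-colored $G_i$-edges: $b_1a_1$, $b_{k-1}a_{k-1}$, and a triplex $e_0^*$ at $b_0'$. I treat the three scenarios of the procedure in turn. In Scenario (Ia), $b_0'$ carries no triplex-edge of $G_i$; Lemma \ref{le2} then forces $b_0'$ to have no $G_i$-edge at all, so $e_0^*$ is absent and only two constraints remain. In Scenario (Ib), all $b_t'$ coincide; Lemma \ref{le21} forces $k$ to be even, and a short induction on the greedy cycle-coloring produces an alternating two-color pattern $c_1, c_2, c_1, c_2, \ldots$ for $b_1a_1, b_2a_2, \ldots, b_{k-1}a_{k-1}$, so that $b_1a_1 = b_{k-1}a_{k-1} = c_1$ (since $k-1$ is odd), again leaving only two distinct constraints on $b_0 a_0$. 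In Scenario (II), the special triplex-ordering is designed so that the greedy colors satisfy $e_0^* = 1$ and $e_1^* = 2$; then the greedy choice for $b_1a_1$ (which must avoid only $e_1^* = 2$) picks $1 = e_0^*$, collapsing the three potential constraints on $b_0a_0$ down to two distinct colors. The hardest step is verifying Scenario (II) subcase B, where one must show that the prescribed six-edge sequence $e_0', e_0'', e_0''', e_1'', e_1', e_1'''$ can be greedily colored with only three colors while additionally yielding $e_0^* = e_0' = 1$ and $e_1^* = e_1' = 2$; this rests on the observation that $B$-singularity at $b_0'$ and $b_1'$ combined with the type-1 character of $a^{(0)}$ and $a^{(1)}$ rules out every potential adjacency between $e_1''$ and the previously-colored $e_0', e_0'', e_0'''$ (any such adjacency would force two $G_i$-edges at a common $B$-vertex), so $e_1''$ is unconstrained and picks color $1$, after which $e_1' = e_1^*$ sees $e_1''$ and picks $2$.
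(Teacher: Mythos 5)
Your proposal is correct and takes essentially the same route as the paper's own proof: defer the triplex, paired, and dispersed stages to Lemma \ref{lemma:no-cycle}, show each cycle lonely-edge other than $b_0a_0$ sees at most two colored $G_i$-edges, and resolve $b_0a_0$ by the same three scenarios (no triplex-edge at $b_0'$; all $b_t'$ equal, using Lemma \ref{le21} and the alternating two-color pattern; $b_0'\neq b_1'$, using the special triplex ordering to force $b_1a_1$ to get the color of $e_0'$), finishing the trees via Lemma \ref{le5} --- indeed your check in Scenario (II) that $e_1''$ sees none of $e_0',e_0'',e_0'''$ fills in a detail the paper leaves implicit. One small slip worth fixing: Lemma \ref{le00} does not apply to paired-edges (its hypothesis requires the edge to be dispersed or lonely), and the exclusion of already-colored paired-edges at $a'$ in that stage needs the separate switching argument given in the proof of Lemma \ref{lemma:no-cycle}; since you defer that stage to Lemma \ref{lemma:no-cycle} anyway, this does not affect the overall argument.
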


\begin{proof} Let $e$ be an arbitrary edge of $G_i$. If $e$ is a triplex-edge, a paired-edge, or a dispersed-edge, then  the proof for Lemma~\ref{lemma:no-cycle} shows that there exists an available color for $e$. So we assume that $e$ is a lonely-edge.   We first look at lonely-edges on the cycle $C$.

For each vertex $b_t$ ($0\leq t\leq k-1$) on $C$, we let $a^\prime_t$ be a neighbor of $b_t$ outside $C$. If there exists an edge $e_t$ of $G_i$  incident to $a^\prime_t$, then by Lemma~\ref{le00},  $e_t $ is lonely.  So $e_t$ is in $T_{t}$, and hence, it is not yet colored.

Assume that $t\neq 0$. Then $b_{t+1}a_{t+1}$ ($t+1$ is taken modulo $k$) is not yet colored. Therefore, $b_ta_t$ sees at most two colored edges in $G_i$:  the edge $b_{t-1} a_{t-1}$ and possibly a triplex-edge incident to $b_t ^\prime$. So there exists a color for $b_ta_t$.

Next assume that $t=0$. If $b_0 ^\prime$ is not incident to a triplex-edge in $G_i$, then $b_0a_0$ only sees two colored edges:  $b_{k-1}a_{k-1}$ and $b_1a_1$. So,  there exists at least one available color for $e$. So we may assume that each $b_t ^\prime$ is incident to a triplex-edge $e_t$.

 If $b_0^\prime= b_1^\prime=\cdots= b_{k-1} ^\prime$, then by Lemma~\ref{le21}, $k$ is even. Note that, all lonely-edges on $C$ can see the triplex-edge $e_0$. So the lonely-edges on $C$ are colored alternatively using two colors different from the color assigned to $e_0$. Since $k$ is even, the edges $b_1a_1$ and $b_{k-1}a_{k-1}$ are assigned the same color, so there exists a color available for $e=b_0a_0$.

Next we assume that $b_0^\prime \neq b_1 ^\prime$. Then our coloring procedure assigns to $e_0^\prime$ and $e_1^\prime$ different colors. When we color $b_1a_1$,  since $b_1a_1$ only sees one colored edge $e_1^\prime$,  it is assigned the same color as $e_0^\prime$  by the greedy coloring.  Note that, $b_0a_0$ sees three colored edges: $b_{k-1}a_{k-1}$, $b_1a_1$, and $e_0^\prime$. Since $b_1a_1$ and $e_0^\prime$ are assigned the same color, there exists a color available for $b_0a_0$.

Now we have assigned colors to all lonely-edges on the cycle $C$ and we are left to assign colors to lonely-edges in each rooted tree $T_t$.
 Let $e=ab$ $(a\in A, b\in B)$  be a lonely-edge in a tree $T_t$.
Let $a^\prime \neq a$ be  a neighbor  of $b$. Let $e^\prime \in E(G_i)$ be an edge incident to $a ^\prime$.  By Lemma~\ref{le00},  $e^\prime$ is lonely. By Lemma ~\ref{le5}, $b$ is the child of $a$. By the construction of the rooted tree $T_Q$, we know that $a^\prime$ is a child of $b$. So, $e^\prime$ is not yet colored.
 It follows that the edges in $G_i$ that the edge $e$ can see must be incident to one of the two other neighbors of $a$, and there are at most two such edges.  Therefore, there exists a color available for $e$.
\end{proof}

\section{Possible extensions}  \label{sec:ext}

So now we know that the strong chromatic index of a $(3,\Delta)$-bipartite graph is at most $3 \Delta$. A natural question to ask is whether the proof technique can be used to prove similar results for other classes of bipartite graphs. More specifically, what about the class of $(4, \Delta)$-bipartite graphs? Greedy coloring arguments can show that the strong chromatic index of a $(4, \Delta)$-bipartite graph is at most $7 \Delta  -3$.  Brualdi and Quinn Massey \cite{[BQ]} conjectured that the bound should be $4 \Delta$. We tried without success to apply the same technique to $(4,\Delta)$-bipartite graphs.  

Let $G$ be a $(4, \Delta)$-bipartite graph with bipartition $(A,B)$ and $\Delta(A) = 4$. We decompose $G$ in a similar manner, say $\{ G_1, G_2, \cdots, G_{\Delta} \}$.  A vertex $a$ in $A$ may be one of the following five types. 

\begin{itemize}
\item Type-1: all four edges incident to $a$ are in the same $G_i$;

\item Type-2: there exist distinct  $i$ and $j$ such that three edges incident to $a$ are in $G_i$, and the other one is in $G_j$; 

\item Type-3: there exist distinct  $i$ and $j$ such that two edges incident to $a$ are in $G_i$, and the other two are in $G_j$; 

\item Type-4: there exist distinct $i$, $j$, and $k$ such that two edges incident to $a$ are in $G_i$, one is in $G_j$, and one is in $G_k$; 

\item Type-5: each edge incident to $a$ is in a different $G_i$. 
\end{itemize}  

We chose a decomposition to maximize the number of  vertices of Type-1, Type-2, Type-3, and Type-4, in the given order.  
We realized that the hardest case was how to color all lonely-edges incident to Type-2 vertices.  Recall that in a $(3,\Delta)$-bipartite graph, every vertex in $A \cap V(H_i)$ has degree 1 or 2 in $H_i$. This is no longer the case in a $(4,\Delta)$-bipartite graph. Indeed, let $H_i$ be the graph spanned by the endpoints of all lonely-edges in $G_i$ that are incident to Type-2 vertices; then a vertex in $A \cap V(H_i)$ may have degree $1$, $2$, or $3$ in $H_i$. Thus, the structure of $H_i$ is not very clear to us. We strongly feel that the current approach is unlikely to be applicable. New ideas and techniques may be necessary even for the next case of $(4,\Delta)$-bipartite graphs. 

Another possible extension of our result is related to list edge-coloring. Let $G$ be a graph and let $L(e)$ be the list of available colors for each edge $e \in E(G)$. A {\em strong list edge-coloring} of $G$ is a strong edge-coloring of $G$ such that every edge $e$ receives a color from its list. A graph $G$ is called {\em strongly $k$-edge-choosable} if it has a strong list edge-coloring whenever each edge has a list of order at least $k$.  

We do not have evidence against that a $(d_A, d_B)$-bipartite graph is strongly $d_A d_B$-choosable, while whether it is strongly $d_A d_B$-colorable is still wide open.   But even for $(d_A, d_B)$-bipartite graphs with $(d_A, d_B)\in \{(3,3), (2,\Delta), (3,\Delta)\}$,  which are known to be strongly $d_A d_B$-edge-colorable,  it is not clear to us if they are strongly $d_A d_B$-edge-choosable, since the current edge decomposition method does not seem to work any more.  We propose to start a new research line by studying the following three open problems. Again we may need to develop some new techniques to solve these problems. 

\begin{itemize} 
\item Problem 1: Are $(3,3)$-bipartite graphs strongly $9$-edge-choosable?  

\item Problem 2: Are $(2, \Delta)$-bipartite graphs strongly $2\Delta$-edge-choosable? 

\item Problem 3:  Are $(3,\Delta)$-bipartite graphs strongly $3\Delta$-edge-choosable?  
\end{itemize}

\small

\end{document}